\newtheorem{theorem}{Theorem}
\newtheorem{corollary}[theorem]{Corollary}
\newtheorem{lemma}[theorem]{Lemma}
\newtheorem{remark}[theorem]{Remark}
\newcommand{\Ker}{\operatorname{Ker}}
\newcommand{\End}{\operatorname{End}}
\newcommand{\Soc}{\operatorname{Soc}}
\newcommand{\SI}{\operatorname{SI}}
\newcommand{\CS}{\operatorname{CS}}
\begin{document}
\title{Rings of Invariant module Type and Automorphism-invariant Modules}
\author{Surjeet Singh}
\address{House No. 424, Sector No. 35 A, Chandigarh-160036, India}
\email{ossinghpal@yahoo.co.in}
\author{Ashish K. Srivastava}
\address{Department of Mathematics and Computer Science, St. Louis University, St.
Louis, MO-63103, USA}
\email{asrivas3@slu.edu}
\keywords{rings of invariant module type, automorphism-invariant modules, quasi-injective modules, pseudo-injective modules.}
\subjclass[2000]{16U60, 16D50}
\dedicatory{Dedicated to T. Y. Lam on his $70$ th Birthday}

\begin{abstract}
A module is called automorphism-invariant if it is invariant under any automorphism of its injective hull. In [Algebras for which every indecomposable right module is invariant in its injective envelope, Pacific J. Math., vol. 31, no. 3 (1969), 655-658] Dickson and Fuller had shown that if $R$ is a finite-dimensional algebra over a field $\mathbb F$ with more than two elements then an indecomposable automorphism-invariant right $R$-module must be quasi-injective. In this paper we show that this result fails to hold if $\mathbb F$ is a field with two elements. Dickson and Fuller had further shown that if $R$ is a finite-dimensional algebra over a field $\mathbb F$ with more than two elements, then $R$ is of right invariant module type if and only if every indecomposable right $R$-module is automorphism-invariant. We extend the result of Dickson and Fuller to any right artinian ring. A ring $R$ is said to be of right automorphism-invariant type (in short, RAI-type) if every finitely generated indecomposable right $R$-module is automorphism-invariant. In this paper we completely characterize an indecomposable right artinian ring of RAI-type.
\end{abstract}

\maketitle

\bigskip

\bigskip

\bigskip

\section{Introduction}

\noindent All our rings have identity element and modules are right unital. A right $R$-module $M$ is called an {\it automorphism-invariant module} if $M$ is invariant under any automorphism of its injective hull, i.e. for any automorphism $\sigma$ of $E(M)$, $\sigma(M)\subseteq M$ where $E(M)$ denotes the injective hull of $M$. 

Indecomposable modules $M$ with the property that $M$ is invariant under any automorphism of its injective hull were first studied by Dickson and Fuller in \cite{DF} for the particular case of finite-dimensional algebras over fields $\mathbb F$ with more than two elements. But for modules over arbitrary rings, study of such a property has been initiated recently by Lee and Zhou in \cite{LZ}. The dual notion of these modules has been proposed by Singh and Srivastava in \cite{SS}. 

The obvious examples of the class of automorphism-invariant modules are quasi-injective modules and pseudo-injective modules. Recall that a module $M$ is said to be {\it $N$-injective} if for every submodule $N_1$ of the module $N$, all homomorphisms $N_1\rightarrow M$ can be extended to homomorphisms $N\rightarrow M$. A right $R$-module $M$ is {\it injective} if $M$ is $N$-injective for every $N\in $ Mod-$R$. A module $M$ is said to be {\it quasi-injective} if $M$ is $M$-injective. A module $M$ is called pseudo-injective
if every monomorphism from a submodule of $M$ to $M$ extends to an endomorphism of $M$.

\bigskip

\noindent Thus we have the following hierarchy;

\bigskip

\noindent injective $\implies$ quasi-injective $\implies$ pseudo-injective $\implies$ automorphism-invariant 

\bigskip

\noindent It is well known that a quasi-injective module need not be injective. In \cite{Teply} Teply gave construction of a pseudo-injective module which is not quasi-injective. We do not know yet an example of an automorphism-invariant module which is not pseudo-injective. 

Dickson and Fuller \cite{DF} studied automorphism-invariant modules in case of finite-dimensional algebras over a field $\mathbb F$ with more than two elements. They proved that if $R$ is a finite-dimensional algebra over a field $\mathbb F$ with more than two elements then an indecomposable automorphism-invariant right $R$-module must be quasi-injective. We show that this result fails to hold if $\mathbb F$ is a field with two elements. A ring $R$ is said to be of {\it right invariant module type} if every indecomposable right $R$-module is quasi-injective. Dickson and Fuller had further shown that if $R$ is a finite-dimensional algebra over a field $\mathbb F$ with more than two elements, then $R$ is of right invariant module type if and only if every indecomposable right $R$-module is automorphism-invariant. We extend the result of Dickson and Fuller to any right artinian ring. 

We call a ring $R$ to be of {\it right automorphism-invariant type} (in short, RAI-type), if every finitely generated indecomposable right $R$-module is automorphism-invariant. In this paper we study the structure of indecomposable right artinian rings of RAI-type.   

Lee and Zhou in \cite{LZ} asked whether every automorphism-invariant module is pseudo-injective. In this paper we show that the answer is in the affirmative for modules with finite Goldie dimension. 

We also prove that a simple right noetherian ring $R$ is a right $\SI$ ring if and only if every cyclic singular right $R$-module is automorphism-invariant.

Before presenting the proofs of these results, let us recall some basic definitions and facts. A module $M$ is said to have finite Goldie (or uniform) dimension if it does not contain an infinite direct sum $\displaystyle\bigoplus_{n\in\mathbb N}M_n$ of non-zero submodules. 

A module $M$ is called {\it directly-finite} if $M$ is not isomorphic to a proper summand of itself. Clearly, a module with finite Goldie dimension is directly-finite. A module $M$ is called a {\it square} if $M\cong X\oplus X$ for some module $X$; and a module is called {\it square-free} if it does not contain a non-zero square. 

A module $M$ is said to have the {\it internal cancellation property} if whenever $M=A_1\oplus B_1=A_2\oplus B_2$ with $A_1\cong A_2$, then $B_1\cong B_2$. For details on internal cancellation property, the reader is referred to \cite{l3}. Now, if an injective module $M$ is directly-finite, then it has internal cancellation property (see \cite[Theorem 1.29]{MM}).

A module $M$ is said to be {\it uniserial} if any two submodules of $M$ are comparable with respect to inclusion. A ring $R$ is called a {\it right uniserial ring} if $R_R$ is a uniserial module. Any direct sum of uniserial modules is called a {\it serial} module. A ring $R$ is said to be a {\it right serial ring} if the module $R_R$ is serial. A ring $R$ is called a {\it serial ring} if $R$ is both left as well as right serial.

\noindent If $A$ is an essential submodule of $B$, then we denote it as $A\subseteq_e B$. For any module $M$, we define $Z(M)=\{x \in M: ann_r(x)\subseteq_{e} R_R\}$. It can be easily checked that $Z(M)$ is a submodule of $M$. It is called the {\it singular submodule} of $M$. If $Z(M)=M$, then $M$ is called a {\it singular module}. If $Z(M)=0$, then $M$ is called a {\it non-singular module}.

\bigskip

\noindent Consider the following three conditions on a module $M$;\\
C1: Every submodule of $M$ is essential in a direct summand of $M$.\\
C2: Every submodule of $M$ isomorphic to a direct summand of $M$ is itself a direct summand of $M$.\\
C3: If $N_1$ and $N_2$ are direct summands of $M$ with $N_1\cap N_2=0$ then $N_1\oplus N_2$ is also a direct summand of $M$.

\bigskip

\noindent A module $M$ is called a {\it continuous module} if it satisfies conditions C1 and C2. A module $M$ is called {\it $\pi$-injective} (or {\it quasi-continuous}) if it satisfies conditions C1 and C3. A module $M$ is called a {\it $\CS$ module} (or {\it extending module}) if it satisfies condition C1.

\noindent In general, we have the following implications\label{hierarchy}. 
\begin{center}
Injective $\Longrightarrow$ Quasi-injective $\Longrightarrow$ Continuous
$\Longrightarrow$ $\pi$-injective $\Longrightarrow$ $\CS$\\ 
\end{center}

\bigskip

\noindent The socle of a module $M$ is denoted by $\Soc(M)$. A right $R$-module $M$ is called {\it semi-artinian} if for every submodule $N\neq M$, $\Soc(M/N)\neq 0$. A ring $R$ is called right semi-artinian if $R_R$ is semi-artinian. We denote by $J(R)$, the Jacobson radical of a ring $R$. For any term not defined here, the reader is referred to \cite{JST}, \cite{l1}, \cite{l2}, and  \cite{MM}.

\bigskip

\section{Basic Facts about Automorphism-invariant Modules}

\bigskip
 
\noindent Lee and Zhou proved the following basic facts about automorphism-invariant modules \cite{LZ}. 
\begin{itemize}
\item A module $M$ is automorphism-invariant if and only if every isomorphism between any two essential submodules of $M$ extends to an automorphism of $M$.
\item A direct summand of an automorphism-invariant module is automorphism-invariant.
\item If for two modules $M_1$ and $M_2$, $M_1\oplus M_2$ is automorphism-invariant, then $M_1$ is $M_2$-injective and $M_2$ is $M_1$-injective.
\item Every automorphism-invariant module satisfies the property C3.
\item A CS automorphism-invariant module is quasi-injective. 
\end{itemize}      

\bigskip

\bigskip

\section{Results}

\noindent Dickson and Fuller in \cite{DF} considered a finite-dimensional algebra $R$ over a field $\mathbb F$ with more than two elements and proved that if an indecomposable right $R$-module $M$ is automorphism-invariant, then $M$ is quasi-injective. They further obtained the following. 

\begin{theorem} $($Dickson and Fuller, \cite{DF}$)$ \label{dickson}
Let $R$ be a finite-dimensional algebra over a field $\mathbb F$ with more than two elements. Then the following statements are equivalent;

\bigskip

\begin{enumerate}[(i)]
\item Each indecomposable right $R$-module is automorphism-invariant.

\bigskip

\item Each indecomposable right $R$-module is quasi-injective.

\bigskip

\item Each indecomposable right $R$-module has a square-free socle.  
\end{enumerate}
\end{theorem}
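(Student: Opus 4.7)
The plan is to prove the three implications $(ii)\Rightarrow(i)$, $(i)\Rightarrow(ii)$ and $(ii)\Leftrightarrow(iii)$. The implication $(ii)\Rightarrow(i)$ is immediate from the hierarchy \emph{quasi-injective $\Rightarrow$ automorphism-invariant} recorded in the introduction and requires no work.

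For $(i)\Rightarrow(ii)$, which is the only direction that really uses the hypothesis $|\mathbb{F}|>2$, my plan is to show first that each indecomposable automorphism-invariant module $M$ is uniform, and then invoke the implication \emph{$\CS$ $+$ automorphism-invariant $\Rightarrow$ quasi-injective} recorded in Section~2. Suppose for contradiction that $E(M)=E_1\oplus E_2$ non-trivially. Since $|\mathbb{F}|>2$ I may pick $\lambda\in\mathbb{F}$ with $\lambda\neq 0,1$, and since $\mathbb{F}\cdot 1_R$ is central in $R$, the map $\sigma_\lambda:=\operatorname{id}_{E_1}\oplus\lambda\cdot\operatorname{id}_{E_2}$ is an $R$-automorphism of $E(M)$. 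Automorphism-invariance gives $\sigma_\lambda(m)\in M$ for every $m=m_1+m_2\in M$ with $m_i\in E_i$, so $(\lambda-1)m_2=\sigma_\lambda(m)-m\in M\cap E_2$; scaling by $(\lambda-1)^{-1}\in\mathbb{F}$ yields $m_2\in M\cap E_2$ and then $m_1\in M\cap E_1$. Hence $M=(M\cap E_1)\oplus(M\cap E_2)$, so indecomposability of $M$ forces, say, $M\cap E_2=0$; together with $E_2\neq 0$ this contradicts $M\subseteq_e E(M)$. Therefore $E(M)$ is indecomposable, $M$ is uniform, every non-zero submodule of $M$ is essential in the only nontrivial summand $M$, so $M$ is $\CS$, and automorphism-invariance is then upgraded to quasi-injectivity by the fact recalled in Section~2.

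For the equivalence $(ii)\Leftrightarrow(iii)$: the direction $(ii)\Rightarrow(iii)$ is straightforward. If $M$ is indecomposable quasi-injective then $M$ is fully invariant in $E(M)$, forcing $E(M)$ to be indecomposable; over the artinian algebra $R$ this means $E(M)=E(T)$ for a single simple $T$, and $\Soc(M)\subseteq\Soc(E(T))=T$ is simple, in particular square-free. The converse $(iii)\Rightarrow(ii)$ is the main obstacle. My plan there is contrapositive: starting from an indecomposable $M$ that fails to be quasi-injective, construct an indecomposable $R$-module whose socle contains two isomorphic copies of a simple. A failure of $M$-injectivity produces a homomorphism $h:N\to M$ from a submodule $N\subseteq M$ that refuses to extend; lifting $h$ to $E(M)$ and playing this obstruction against the automorphisms of $E(T)\oplus E(T)$ for a simple summand $T$ of $\Soc(M)$ should produce an indecomposable submodule of $E(T)\oplus E(T)$ with socle $T\oplus T$, contradicting (iii). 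The delicate step is to ensure that the constructed module is genuinely indecomposable rather than a split direct sum; this uses that $R$ is a finite-dimensional $\mathbb{F}$-algebra (so Krull--Schmidt applies) and the explicit division-algebra structure of $\End(E(T))$.
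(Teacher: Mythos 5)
The paper itself does not prove Theorem \ref{dickson}: it is quoted from Dickson--Fuller, and the nearest material in the paper is Lemma \ref{new-imp}, Theorem \ref{imp2} and Corollary \ref{imp3} (which reprove (i)$\Leftrightarrow$(ii) over any $\mathbb F$-algebra with $|\mathbb F|>2$) together with Lemma \ref{tuk} and Corollary \ref{soc} (which give (i)$\Rightarrow$(iii)). Your treatment of (ii)$\Rightarrow$(i), (i)$\Rightarrow$(ii) and (ii)$\Rightarrow$(iii) is correct. In particular your (i)$\Rightarrow$(ii) is essentially the paper's own device: choosing $\lambda\neq 0,1$ amounts to taking $\sigma_1=\lambda\cdot I_{E_1}$ in Lemma \ref{new-imp}, so that $I_{E_1}-\sigma_1=(1-\lambda)I_{E_1}$ is again an automorphism; your detour through uniformity and the Lee--Zhou fact ``$\CS$ $+$ automorphism-invariant $\Rightarrow$ quasi-injective'' is a legitimate variant of the paper's argument (which instead uses cleanness of $\End(E(M))$, or locality of $\End(E(M))$ for uniform $M$).

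The genuine gap is (iii)$\Rightarrow$(ii), which you describe but do not prove. The sentence ``playing this obstruction against the automorphisms of $E(T)\oplus E(T)$ \dots\ should produce an indecomposable submodule of $E(T)\oplus E(T)$ with socle $T\oplus T$'' is precisely the hard content of the Curtis--Jans/Dickson--Fuller argument, and nothing in your outline carries it out. Concretely: (a) a non-extendable $h:N\to M$ gives no canonical simple $T$ at which the obstruction is concentrated, so you must first reduce to a uniform situation, which itself requires showing that a non-uniform indecomposable forces some \emph{other} indecomposable to have non-square-free socle; (b) the natural candidates for the offending module (the graph of $h$ in $M\oplus M$, or the amalgam $M\oplus M$ modulo $\{(n,-h(n))\}$) must be proved indecomposable, and Krull--Schmidt only tells you a decomposition into indecomposables exists --- it does not show your module is one of them; that step needs an explicit analysis of idempotents in the endomorphism ring of the construction. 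Since the paper nowhere proves this implication either, you cannot lean on it; you must either import the full Dickson--Fuller construction or cite it, and as written your proposal establishes only (i)$\Leftrightarrow$(ii)$\Rightarrow$(iii).
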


\bigskip

\noindent  We will provide an example to show that if $R$ is a finite-dimensional algebra over a field $\mathbb F$ with two elements, then an indecomposable automorphism-invariant right $R$-module need not be quasi-injective. 

First, note that in an artinian serial ring $R$, any indecomposable summand of $R_R$ of maximum length is injective. Thus if $T_n(D)$ is the upper triangular matrix ring over a division ring $D$, then $e_{11}T_n(D)$ is injective and uniserial. 

\bigskip

\noindent {\bf Example.} Let $R=\left[ 
\begin{array}{ccc}
\mathbb F & \mathbb F & \mathbb F \\ 
0 & \mathbb F & 0 \\ 
0 & 0 & \mathbb F  \\ 
\end{array}
\right] $ where $\mathbb F$ is a field of order 2. 

\noindent We know that $R$ is a left serial ring. Note that $e_{11}R$ is a local module, $e_{12}\mathbb F\cong e_{22}R$, $e_{13}\mathbb F\cong e_{33}R$ and $e_{11}J(R)=e_{12}\mathbb F\oplus e_{13}\mathbb F$, a direct sum of two minimal right ideals. So the injective hull of $e_{11}R$ is $E(e_{11}R)=E_1\oplus E_2$, where $E_1=E(e_{12}\mathbb F)$ and $E_2=E(e_{13}\mathbb F)$. 

Now set $A=ann_r(e_{12}\mathbb F)$. Then $A=e_{13}\mathbb F+e_{33}\mathbb F$. Thus $\overline{R}=R/A\cong \left[ 
\begin{array}{cc}
\mathbb F & \mathbb F\\ 
0 & \mathbb F\\ 
\end{array}
\right] =S$. Denote the first row of $S$ by $S_1$. It may be checked that $S_1$ is injective. As $\mathbb F$ has only two elements, $S_1$ has only two endomorphisms, zero and the identity. Take the pre-image $L_1$ of $S_1$ in $\overline{R}$. It is uniserial with composition length 2, and $e_{12}\mathbb F$ naturally embeds in $L_1$. There is no mapping of $e_{13}\mathbb F$ into $L_1$. It follows that $L_1$ is $e_{11}R$-injective and $e_{12}\mathbb F$-injective. As $e_{22}R\cong e_{12}\mathbb F$, $L_1$ is $e_{22}R$-injective. There is no map from $e_{33}R$ into $L_1$ so it is also $e_{33}R$-injective. Hence $L_1$ is injective. Thus $E_1= L_1$ and its ring of endomorphisms has only two elements. 

If $B = ann_r(e_{13}\mathbb F)$, then $B = e_{12}\mathbb F + e_{22}\mathbb F$. Thus $R/B\cong \left[ 
\begin{array}{cc}
\mathbb F & \mathbb F \\ 
0 & \mathbb F\\ 
\end{array}
\right]$. The pre-image of $S_1$ in $R/B$ is $L_2$, which is uniserial, and injective. We have $E_2 \cong L_2$ and its ring of endomorphism has only two elements. 

Note that $e_{11}R$ has all its composition factors non-isomorphic, both $L_1$ and $L_2$ have composition length 2 with $\frac{L_1}{L_1J(R)}\cong \frac{e_{11}R}{e_{11}J(R)}$, $L_1J(R)\cong e_{22}R$, $\frac{L_2}{L_2J(R)}\cong \frac{e_{11}R}{e_{11}J(R)}$, and $L_2J(R)\cong e_{33}R$. Thus $L_1, L_2$ have isomorphic tops but non-isomorphic socles. 

Suppose there exists a non-zero mapping $\sigma: L_1\rightarrow L_2$. Then $\sigma(L_1)=L_2J(R)$. Thus $\frac{e_{11}R}{e_{11}J(R)}\cong e_{33}R$, which is a contradiction. Therefore, there is no non-zero map between $L_1$ and $L_2$. 

Hence the only automorphism of $L_1\oplus L_2$ is the identity. So $e_{11}R$ is trivially automorphism-invariant but it is not uniform. Then clearly $e_{11}R$ is not quasi-injective as an indecomposable quasi-injective module must be uniform. 

Thus, this ring $R$ is an example of a finite-dimensional algebra over a field $\mathbb F$ with two elements such that there exists an indecomposable right $R$-module which is automorphism-invariant but not quasi-injective. ~\hfill$\square$    

\bigskip

\noindent Next, we proceed to extend the result of Dickson and Fuller \cite{DF} to any right artinian ring. But, first we obtain a useful result on decomposition property of automorphism-invariant modules. 

We will show that under certain conditions a decomposition of injective hull $E(M)$ of an automorphism-invariant module $M$ induces a natural decomposition of $M$.

We will denote the identity automorphism on any module $M$ by $I_M$.

\begin{lemma} \label{s1}
Let $M$ be an automorphism-invariant right module over any ring $R$. If $E(M)=E_1\oplus E_2$ and $\pi_1:E(M)\rightarrow E_1$ is an associated projection, then $M_1=\pi_1(M)$ is also automorphism-invariant.
\end{lemma}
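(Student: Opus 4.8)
The plan is to use the characterization of automorphism-invariant modules stated earlier: $M_1=\pi_1(M)$ is automorphism-invariant if and only if every isomorphism between two essential submodules of $M_1$ extends to an automorphism of $M_1$. First I would observe that $E_1$ is the injective hull of $M_1$: since $\pi_1$ restricted to $M$ surjects onto $M_1$ and $E_1$ is injective, it suffices to check $M_1\subseteq_e E_1$, and this follows because $M\subseteq_e E(M)=E_1\oplus E_2$ forces $\pi_1(M)\subseteq_e E_1$ (a standard fact: the image of an essential submodule under a projection associated to a direct sum decomposition is essential in the summand). So $E(M_1)=E_1$.

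Next, let $\varphi:A\to B$ be an isomorphism between essential submodules $A,B$ of $M_1$. Since $E_1=E(M_1)$ is injective, $\varphi$ extends to an endomorphism $\tilde\varphi$ of $E_1$; because $A$ is essential in $E_1$ and $\varphi$ is a monomorphism, $\tilde\varphi$ is in fact a monomorphism, and since $E_1$ is injective (hence $\tilde\varphi(E_1)$ is injective and essential in $E_1$, so equal to $E_1$), $\tilde\varphi$ is an automorphism of $E_1$. The key step is then to lift $\tilde\varphi$ to an automorphism $\Phi$ of $E(M)=E_1\oplus E_2$ that leaves $E_2$ fixed, namely $\Phi=\tilde\varphi\oplus I_{E_2}$; since $M$ is automorphism-invariant, $\Phi(M)\subseteq M$, and applying the same reasoning to $\Phi^{-1}=\tilde\varphi^{-1}\oplus I_{E_2}$ gives $\Phi(M)=M$. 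Now $\pi_1$ commutes with $\Phi$ (because $\Phi$ respects the decomposition $E_1\oplus E_2$), so $\Phi$ carries $M_1=\pi_1(M)$ onto $\pi_1(\Phi(M))=\pi_1(M)=M_1$; hence $\tilde\varphi$ restricts to an automorphism of $M_1$ extending $\varphi$. By the characterization, $M_1$ is automorphism-invariant.

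The main obstacle I anticipate is the verification that the extension $\tilde\varphi$ of $\varphi$ to $E_1$ is genuinely an automorphism rather than merely an endomorphism — one must use essentiality of $A$ in $E_1$ to get injectivity of $\tilde\varphi$, and then injectivity of $E_1$ to upgrade the injective endomorphism to a surjection. A secondary point requiring care is that $\pi_1$ literally intertwines $\Phi$ and $\Phi$, i.e. $\pi_1\circ\Phi=\Phi\circ\pi_1$ as maps on $E(M)$; this is immediate once we write $\Phi=\tilde\varphi\oplus I_{E_2}$ in block form, but it is the step that makes the whole argument go through and should be stated explicitly. Everything else is bookkeeping with essential extensions and injective hulls.
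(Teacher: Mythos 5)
Your proof is correct, but you take a longer route than the paper. The paper works directly from the definition: take an arbitrary automorphism $\sigma_1$ of $E_1$, form $\sigma = \sigma_1 \oplus I_{E_2}$ (an automorphism of $E(M)$), write $x_1 \in M_1 = \pi_1(M)$ as $\pi_1(x)$ for some $x = x_1 + x_2 \in M$, note $\sigma(x) = \sigma_1(x_1) + x_2 \in M$ by automorphism-invariance of $M$, and project to get $\sigma_1(x_1) \in M_1$. That is the whole argument. You instead invoke the Lee--Zhou characterization (``every isomorphism between essential submodules extends to an automorphism''), begin from an isomorphism $\varphi$ of essential submodules of $M_1$, extend it to an automorphism $\tilde\varphi$ of $E_1$, and then carry out essentially the paper's step --- forming $\Phi = \tilde\varphi \oplus I_{E_2}$ and using automorphism-invariance of $M$ --- to conclude. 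So the key trick (padding the automorphism of $E_1$ with the identity on $E_2$) is the same; you have simply wrapped it in an extra layer of translation to and from the characterization, which is correct but unnecessary work. On the other hand, your proof does explicitly verify that $M_1 \subseteq_e E_1$, hence $E_1 = E(M_1)$ --- a point the paper's Lemma~\ref{s1} uses implicitly and only makes explicit later (in the proof of Lemma~\ref{new-imp}) --- and you also record the stronger fact $\Phi(M)=M$, needed because the characterization asks for an \emph{automorphism} of $M_1$ rather than a mere self-map. Both proofs are sound; the paper's is tighter.
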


\begin{proof}
Let $E(M)=E_1\oplus E_2$ and $M_1=\pi_1(M)$, where $\pi_1:E(M)\rightarrow E_1$ is a projection with $E_2$ as its kernel. Let $\sigma_1$ be an automorphism of $E_1$ and $x_1\in M_1$. For some $x\in M$, and $x_2\in E_2$, we have $x=x_1+x_2$. Now $\sigma=\sigma_1 \oplus I_{E_2}$ is an automorphism of $E$. Thus $\sigma(x)=\sigma_1(x_1)+x_2\in M$, which gives $\sigma_1(x_1)\in M_1$. Hence $M_1$ is automorphism-invariant. 
\end{proof}

\begin{lemma} \label{new-imp}
Let $M$ be an automorphism-invariant right module over any ring $R$. Let $E(M)=E_1\oplus E_2$ such that there exists an automorphism $\sigma_1$ of $E_1$ such that $I_{E_1}-\sigma_1$ is also an automorphism of $E_1$. Then \[M=(M\cap E_1) \oplus (M\cap E_2).\]
\end{lemma}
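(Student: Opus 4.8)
The plan is to reduce the statement to a single containment. Let $\pi_1\colon E(M)\to E_1$ be the projection along $E_2$ and put $M_1=\pi_1(M)$. Since $E_1\cap E_2=0$, the sum $(M\cap E_1)+(M\cap E_2)$ is automatically direct, so the assertion $M=(M\cap E_1)\oplus(M\cap E_2)$ is equivalent to: every $x\in M$ has $\pi_1(x)\in M$. As $M\cap E_1\subseteq M_1$ always holds, this in turn amounts to proving the reverse inclusion $M_1\subseteq M\cap E_1$, i.e. $M\cap E_1=M_1$.

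First I would record two structural facts. (1) $M_1$ is automorphism-invariant: this is precisely Lemma \ref{s1}. (2) $E_1$ is the injective hull of $M_1$. Indeed, $E_1$ is injective as a direct summand of the injective module $E(M)$, and $M_1\subseteq_e E_1$: for a nonzero submodule $N\subseteq E_1$ pick $0\neq y\in N$; essentiality of $M$ in $E(M)$ gives $0\neq yr\in M$ for some $r\in R$, and $yr\in E_1$ (as $E_1$ is a submodule), so $0\neq yr\in M\cap E_1\subseteq M_1$, whence $N\cap M_1\neq 0$. Thus $E_1=E(M_1)$.

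The core step uses the hypothesis on $\sigma_1$. Set $\tau=I_{E_1}-\sigma_1$, which is an automorphism of $E_1$ by assumption, and let $\sigma=\sigma_1\oplus I_{E_2}$, an automorphism of $E(M)$. For $x\in M$ write $x=x_1+x_2$ with $x_i\in E_i$; automorphism-invariance of $M$ gives $\sigma(x)=\sigma_1(x_1)+x_2\in M$, hence $x-\sigma(x)=(I_{E_1}-\sigma_1)(x_1)=\tau(x_1)\in M$, and since $\tau(x_1)\in E_1$ we obtain $\tau(x_1)\in M\cap E_1$. Letting $x$ range over $M$ yields $\tau(M_1)\subseteq M\cap E_1$. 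Now apply automorphism-invariance of $M_1$, with $E(M_1)=E_1$, to the two automorphisms $\tau$ and $\tau^{-1}$ of $E_1$: from $\tau(M_1)\subseteq M_1$ and $\tau^{-1}(M_1)\subseteq M_1$ we get $M_1\subseteq\tau(M_1)$, so $\tau(M_1)=M_1$. Combining, $M_1=\tau(M_1)\subseteq M\cap E_1\subseteq M_1$, so $M\cap E_1=M_1$. Finally, for $x\in M$ we then have $x_1=\pi_1(x)\in M_1=M\cap E_1\subseteq M$, hence $x_2=x-x_1\in M\cap E_2$, which gives $M=(M\cap E_1)\oplus(M\cap E_2)$.

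The only delicate point is upgrading $\tau(M_1)\subseteq M\cap E_1$ to the equality $\tau(M_1)=M_1$; here the automorphism-invariance of the projected module $M_1$ together with the identification $E_1=E(M_1)$ is what does the work. Without it one merely gets a descending chain $\tau^{\,n}(M_1)\subseteq M\cap E_1$ with no reason to stabilize, and the argument stalls. Everything else is formal bookkeeping with the projections $\pi_1,\pi_2$. (This is also the place where the role of fields with more than two elements becomes visible: scalar multiplication by $\lambda\neq 0,1$ provides exactly such a $\sigma_1$.)
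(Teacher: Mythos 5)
Your proof is correct and follows essentially the same approach as the paper: project $M$ onto $E_1$ to get $M_1$, invoke Lemma \ref{s1} and the essentiality $M_1\subseteq_e E_1$ to see $M_1$ is automorphism-invariant with $E(M_1)=E_1$, and then exploit that $\tau=I_{E_1}-\sigma_1$ (and $\tau^{-1}$) fix $M_1$ while $\tau(M_1)=(I_E-\sigma)(M)\subseteq M$. Your bookkeeping via the equality $M_1=\tau(M_1)\subseteq M\cap E_1\subseteq M_1$ is slightly tidier than the element-chase in the paper, but the underlying argument is identical.
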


\begin{proof}
Set $E=E(M)$. Set $I_E = I_{E_1}\oplus I_{E_2}$, and $\sigma = \sigma_1 \oplus I_{E_2}$. Clearly, both $I_E$ and $\sigma$ are automorphisms of $E$. Since $M$ is assumed to be an automorphism-invariant module, $M$ is invariant under automorphisms $I_E$ and $\sigma$. Consequently, $M$ is invariant under $I_E-\sigma$ too. Note that $I_E-\sigma=(I_{E_1}-\sigma_1)\oplus 0$. Thus $(I_E-\sigma)(M)=(I_{E_1}-\sigma_1)(M)\subseteq M$. Let $\pi_1: E\rightarrow E_1$ and $\pi_2: E\rightarrow E_2$ be the canonical projections. Set $M_1=\pi_1(M)$ and $M_2=\pi_2(M)$. Now $M\cap E_1\subseteq M_1$ and $M\cap E_2\subseteq M_2$. 

Let $0\neq u_1 \in E_1$. For some $r\in R$, $0\neq u_1 r\in M$ and thus $u_1r\in M_1$. Thus $M_1\subseteq_{e} E_1$. By Lemma \ref{s1}, $M_1$ is automorphism-invariant. Therefore, $M_1=(I_{E_1}-\sigma_1)^{-1}(M_1)$. Let $x_1\in M_1$. Then, we have for some $x \in M$, $x = x_1+x_2$, $x_2 \in E_2$. Now, as $I_{E_1}-\sigma_1$ is an automorphism on $E_1$, there exists an element $y_1 \in E_1$ such that $(I_{E_1}-\sigma_1)(y_1) = x_1$, which gives $y_1 \in (I_{E_1}-\sigma_1)^{-1}(M_1) = M_1$. This yields an element $y \in M$ such that $y = y_1 + y_2$ for some $y_2\in E_2$. We get $(I_E-\sigma)(y) = (I_{E_1}-\sigma_1)(y_1) = x_1$. Thus $x_1 \in (I_{E}-\sigma)(M)$. As $(I_{E}-\sigma)(M)\subseteq M$, we get $x_1\in M$. Hence $M_1\subseteq M$. 

Now, let $u_2 \in M_2$ be an arbitrary element. For some $u_1\in M_1$, we have $u=u_1+u_2\in M$. But we have shown in the previous paragraph that $M_1\subseteq M$, so $u_1\in M$. Therefore $u_2=u-u_1\in M$. Hence $M_2 \subseteq M$. This gives $M_1\oplus M_2 \subseteq M$ and hence $M = M_1\oplus M_2$. Thus $M=(M\cap E_1) \oplus (M\cap E_2)$.
\end{proof}

\bigskip

A quasi-injective module is obviously automorphism-invariant. In the next result we give a condition under which an automorphism-invariant module must be quasi-injective.  

\begin{theorem} \label{imp2}
Let $M$ be a right module over any ring $R$ such that every summand $E_1$ of $E(M)$ admits an automorphism $\sigma_1$ such that $I_{E_1}-\sigma_1$ is also an automorphism of $E_1$, then $M$ is automorphism-invariant if and only if $M$ is quasi-injective.  
\end{theorem}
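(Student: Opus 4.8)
The plan is to prove the two implications of the equivalence separately. The direction ``quasi-injective $\Rightarrow$ automorphism-invariant'' is immediate from the chain of implications recalled in the introduction (quasi-injective $\Rightarrow$ pseudo-injective $\Rightarrow$ automorphism-invariant) and uses nothing about the hypothesis on summands of $E(M)$. All the content is in the converse, so assume $M$ is automorphism-invariant. The strategy is to reduce to the Lee--Zhou fact recalled in Section 2 that a CS automorphism-invariant module is quasi-injective; thus the real goal becomes: show that, under the stated hypothesis, $M$ satisfies condition C1, i.e.\ $M$ is a CS module.

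To establish C1, I would take an arbitrary submodule $N$ of $M$. Since $N \subseteq M \subseteq E(M)$, choose a maximal essential extension $E_1$ of $N$ inside $E(M)$; then $N \subseteq_e E_1$ and $E_1$ is injective, hence a direct summand, say $E(M) = E_1 \oplus E_2$. Because $E_1$ is a summand of $E(M)$, the hypothesis supplies an automorphism $\sigma_1$ of $E_1$ for which $I_{E_1} - \sigma_1$ is again an automorphism of $E_1$, so Lemma \ref{new-imp} applies and gives $M = (M \cap E_1) \oplus (M \cap E_2)$. In particular $M \cap E_1$ is a direct summand of $M$, and since $N \subseteq M \cap E_1$ with $N \subseteq_e E_1$ we get $N \subseteq_e M \cap E_1$. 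Hence every submodule of $M$ is essential in a direct summand of $M$, so $M$ is CS; being CS and automorphism-invariant, $M$ is then quasi-injective by the cited Lee--Zhou result, which completes the proof.

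The step requiring the most care is recognizing that the hypothesis, phrased for ``every summand $E_1$ of $E(M)$,'' legitimately applies to $E_1 = E(N)$ realized inside $E(M)$, and then tracking essentiality correctly through the inclusions $N \subseteq M \cap E_1 \subseteq E_1$; the rest is routine. A more direct alternative would be to verify full invariance of $M$ under $\End(E(M))$ via the Johnson--Wong criterion, by dilating a given endomorphism $f$ of $E(M)$ to an automorphism of a larger injective module and pulling back; but this is more delicate, and routing through the CS property together with Lemma \ref{new-imp} is cleaner and exploits exactly the hypothesis provided.
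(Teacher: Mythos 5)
Your proof is correct, but it takes a genuinely different route from the paper's. The paper proves the nontrivial implication directly: take an arbitrary $\sigma \in \End(E(M))$, invoke the theorem of Camillo--Khurana--Lam--Nicholson--Zhou that $\End(E(M))$ is a clean ring to write $\sigma = \alpha + \beta$ with $\alpha$ idempotent and $\beta$ a unit, set $E_1 = \alpha E(M)$ and $E_2 = (1-\alpha)E(M)$, apply Lemma~\ref{new-imp} to get $M = (M\cap E_1)\oplus(M\cap E_2)$, observe this forces $\alpha(M)\subseteq M$, and combine with $\beta(M)\subseteq M$ (automorphism-invariance) to conclude $\sigma(M)\subseteq M$. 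So the paper verifies the Johnson--Wong criterion endomorphism-by-endomorphism, with cleanness as the engine. You instead use Lemma~\ref{new-imp} to establish that $M$ is CS --- any $N\subseteq M$ sits essentially in an injective summand $E_1$ of $E(M)$, the lemma splits off $M\cap E_1$, and $N\subseteq_e M\cap E_1$ --- and then invoke the Lee--Zhou fact from Section~2 that a CS automorphism-invariant module is quasi-injective. Both arguments hinge on Lemma~\ref{new-imp}, and your essentiality bookkeeping ($N\subseteq_e E_1$ plus $N\subseteq M\cap E_1\subseteq E_1$ gives $N\subseteq_e M\cap E_1$) is sound. The trade-off is this: the paper's proof is self-contained modulo the clean-ring theorem and needs no C1/C2/C3 machinery, while yours is shorter and conceptually transparent but outsources the final step to the cited Lee--Zhou result, whose own proof may well route through cleanness anyway. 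Your closing remark about an alternative via dilating $f\in\End(E(M))$ to an automorphism is in fact closer in spirit to what the paper actually does.
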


\begin{proof}  
Let $M$ be automorphism-invariant. Set $E=E(M)$. Suppose every summand $E_1$ of $E$ admits an automorphism $\sigma_1$ such that $I_{E_1}-\sigma_1$ is also an automorphism of $E_1$. 

Let $\sigma \in \End(E)$ be an arbitrary element. Since $\End(E)$ is a clean ring \cite{CKLNZ}, $\sigma=\alpha+\beta$ where $\alpha$ is an idempotent and $\beta$ is an automorphism. 

Let $E_1=\alpha E$, and $E_2=(1-\alpha)E$. Then $E=E_1\oplus E_2$. By Lemma \ref{new-imp}, we have $M=M_1\oplus M_2$ where $M_1=M\cap E_1, M_2=M\cap E_2$. 

Then clearly $\alpha(M)\subseteq M$. Since $M$ is automorphism-invariant, $\beta(M)\subseteq M$. Thus $\sigma(M)\subseteq M$. Hence $M$ is quasi-injective.  

The converse is obvious.
\end{proof}

As a consequence of this theorem, we may now deduce the following which extends the result of Dickson and Fuller \cite{DF} to any algebra (not necessarily finite-dimensional) over a 
field $\mathbb F$ with more than two elements.

\begin{corollary} \label{imp3}
Let $R$ be any algebra over a field $\mathbb F$ with more than two elements. Then the following are equivalent;
\begin{enumerate}[(i)]
\item Each indecomposable right $R$-module is automorphism-invariant. 

\bigskip

\item Each indecomposable right $R$-module is quasi-injective, that is, $R$ is of right invariant module type. 
\end{enumerate}  
\end{corollary}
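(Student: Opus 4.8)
The plan is to reduce everything to Theorem \ref{imp2} by verifying, for an arbitrary indecomposable right $R$-module $M$, that every direct summand of $E(M)$ admits an automorphism $\sigma_1$ with $I_{E_1}-\sigma_1$ again an automorphism. The key observation is that since $R$ is an $\mathbb F$-algebra, every right $R$-module is in particular an $\mathbb F$-vector space, and for each nonzero scalar $\lambda\in\mathbb F$ the map $x\mapsto\lambda x$ is an $R$-module automorphism (its inverse is multiplication by $\lambda^{-1}$). Because $|\mathbb F|>2$, we may pick $\lambda\in\mathbb F$ with $\lambda\neq 0$ and $1-\lambda\neq 0$. Then for any summand $E_1$ of $E(M)$ we set $\sigma_1=\lambda I_{E_1}$, which is an automorphism, and $I_{E_1}-\sigma_1=(1-\lambda)I_{E_1}$ is an automorphism as well. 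Hence the hypothesis of Theorem \ref{imp2} is satisfied by every right $R$-module.

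With this in hand the equivalence is immediate. For (ii)$\Rightarrow$(i), a quasi-injective module is automorphism-invariant by the hierarchy recalled in the introduction, so nothing more is needed. For (i)$\Rightarrow$(ii), let $M$ be an indecomposable right $R$-module; by hypothesis $M$ is automorphism-invariant, and by the previous paragraph every summand of $E(M)$ carries an automorphism of the required form, so Theorem \ref{imp2} gives that $M$ is quasi-injective. As $M$ was arbitrary, $R$ is of right invariant module type, which is statement (ii).

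There is essentially no obstacle in this argument; the only thing worth noting is that the room to choose $\lambda$ with both $\lambda$ and $1-\lambda$ invertible is precisely what disappears over $\mathbb F_2$, which is consistent with the Example preceding these results. One could also avoid citing Theorem \ref{imp2} and rerun the clean-decomposition argument directly via Lemma \ref{new-imp}, but invoking \ref{imp2} keeps the proof to a few lines.
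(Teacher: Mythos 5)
Your proof is correct and uses essentially the same idea as the paper: choose a scalar $\lambda\in\mathbb F$ with $\lambda\neq 0,1$, note that scalar multiplication by $\lambda$ is an $R$-module automorphism (since $\mathbb F$ is central in the algebra $R$), and apply Theorem \ref{imp2}. The paper's proof is a one-line version of exactly this argument.
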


\begin{proof}
Clearly, for any right $R$-module $E$, the multiplication by an element $u \in \mathbb F$ where $u\neq 0$ and $u\neq 1$ gives an automorphism $\sigma$ of $E$ such that $I_E-\sigma$ is also an automorphism of $E$. Hence the result follows from the above theorem.  
\end{proof}

\begin{corollary} (\cite{LZ}) 
Let $R$ be a ring in which $2$ is invertible. Then any automorphism-invariant module over $R$ is quasi-injective. 
\end{corollary}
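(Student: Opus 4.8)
The plan is to obtain this statement as an immediate application of Theorem \ref{imp2}. To invoke that theorem for a given automorphism-invariant module $M$, the only thing to check is its hypothesis: that every direct summand $E_1$ of the injective hull $E(M)$ carries an automorphism $\sigma_1$ for which $I_{E_1}-\sigma_1$ is again an automorphism of $E_1$.

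First I would record the elementary observation that $2=1+1$ is central in any ring $R$, and hence, when $2$ is a unit, its inverse $2^{-1}$ is central as well (the inverse of a central unit is central). Therefore, for any right $R$-module $N$, the map $\mu_2\colon N\to N$ given by $x\mapsto x+x=2x$ is an $R$-module homomorphism, and it is bijective with two-sided inverse $x\mapsto 2^{-1}x$; that is, $\mu_2=2\,I_N$ is an automorphism of $N$.

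Next, applying this with $N=E_1$ for an arbitrary summand $E_1$ of $E(M)$, I would set $\sigma_1=2\,I_{E_1}$. Then $I_{E_1}-\sigma_1=-I_{E_1}$, which is plainly an automorphism of $E_1$ (indeed it is its own inverse). Thus the hypothesis of Theorem \ref{imp2} holds for every summand of $E(M)$, and that theorem yields at once that $M$ is automorphism-invariant if and only if $M$ is quasi-injective; in particular, every automorphism-invariant $R$-module is quasi-injective.

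The argument is entirely formal, so no serious obstacle is anticipated; the only point deserving a moment's attention is the centrality of $2$ and of $2^{-1}$, which is exactly what guarantees that multiplication by these elements defines $R$-linear endomorphisms of an arbitrary module rather than merely additive ones.
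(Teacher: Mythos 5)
Your proof is correct and follows exactly the same approach as the paper: take $\sigma_1$ to be multiplication by $2$ on any summand $E_1$ of $E(M)$, note that $I_{E_1}-\sigma_1=-I_{E_1}$ is an automorphism, and invoke Theorem \ref{imp2}. The extra remark about the centrality of $2$ and $2^{-1}$ is a harmless amplification of the observation the paper makes implicitly.
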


\begin{proof}
Let $M$ be an automorphism-invariant right $R$-module. Let $E = E(M)$. Let $E_1$ be any summand of $E$. We have automorphism $\sigma_1 : E_1\rightarrow E_1$, given by $\sigma_1(x) = 2x$, $x \in E_1$. Clearly, $I_{E_1}-\sigma_1=-I_{E_1}$ is also an automorphism of $E_1$. By Theorem \ref{imp2}, $M$ is quasi-injective.
\end{proof}

In the next lemma we give another useful result on decomposition of automorphism-invariant modules. 

\begin{lemma} \label{tuk}
Let $M$ be an automorphism-invariant right module over any ring $R$. If $E(M)=E_1\oplus E_2\oplus E_3$, where $E_1\cong E_2$, then \[M=(M\cap E_1) \oplus (M\cap E_2) \oplus (M\cap E_3).\] 
\end{lemma}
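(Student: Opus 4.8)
The plan is to remove the summand $E_3$ first by an application of Lemma \ref{new-imp}, and then to split the remaining piece $M\cap(E_1\oplus E_2)$ directly, using the isomorphism $E_1\cong E_2$ to produce a supply of automorphisms of $E_1\oplus E_2$.

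First I would fix an isomorphism $\phi\colon E_1\to E_2$ and use it to identify $\End(E_1\oplus E_2)$ with the ring of $2\times2$ matrices over $\End(E_1)$. Over any ring the matrix $\rho=\left[\begin{smallmatrix}1&1\\-1&0\end{smallmatrix}\right]$ satisfies $\rho^{2}-\rho+1=0$, hence is a unit with inverse $1-\rho$; correspondingly $\rho$ is an automorphism of $E_1\oplus E_2$ for which $I-\rho$ is again an automorphism of $E_1\oplus E_2$. Applying Lemma \ref{new-imp} to the decomposition $E(M)=(E_1\oplus E_2)\oplus E_3$ and this $\rho$ gives
\[
M=\bigl(M\cap(E_1\oplus E_2)\bigr)\oplus(M\cap E_3).
\]
Set $N=M\cap(E_1\oplus E_2)$. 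Being a direct summand of $M$, the module $N$ is automorphism-invariant; and since $M\subseteq_e E(M)$ one checks at once that $N\subseteq_e E_1\oplus E_2$, so $E(N)=E_1\oplus E_2$. It now suffices to prove $N=(N\cap E_1)\oplus(N\cap E_2)$, because then $N\cap E_i=M\cap E_i$ and the claim follows by combining with the displayed decomposition.

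To prove this, note that as $N$ is automorphism-invariant with $E(N)=E_1\oplus E_2$, it is carried into itself by every automorphism of $E_1\oplus E_2$, and also by differences of such; in particular by $\rho$, by the exchange automorphism $\omega$ with $\omega(x_1+x_2)=\phi^{-1}(x_2)+\phi(x_1)$, and by $\delta$ with $\delta(x_1+x_2)=x_1-x_2$. Writing elements of $E_1\oplus E_2$ in coordinates $(a,b)$, $a,b\in E_1$, via $\phi$ (so $E_1=\{(a,0)\}$, $E_2=\{(0,b)\}$, $\rho(a,b)=(a+b,-a)$, $\omega(a,b)=(b,a)$, $\delta(a,b)=(a,-b)$), for an arbitrary $(a,b)\in N$ I would obtain successively $(a+b,-a)=\rho(a,b)\in N$, then $(a+b,a)=\delta(a+b,-a)\in N$, and finally $(a,0)=(a+b,a)-\omega(a,b)\in N$. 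Thus the $E_1$-component of every element of $N$ lies in $N$, whence so does the $E_2$-component, and $N=(N\cap E_1)\oplus(N\cap E_2)$ as required.

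The step I expect to be the main obstacle is precisely this last splitting of $E_1\oplus E_2$: one cannot handle it by a second use of Lemma \ref{new-imp}, since the single summand $E_1$ may fail to admit any automorphism $\tau$ with $I_{E_1}-\tau$ invertible — the Example earlier in the paper exhibits exactly such a module over the two-element field. What makes the argument go through is that the \emph{sum} of two isomorphic injectives always carries the extra ``elementary'' automorphisms $\rho$, $\omega$, $\delta$ depending only on $\phi$, and the short coordinate computation above is arranged to extract the decomposition from them.
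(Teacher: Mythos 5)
Your proof is correct, and it takes a genuinely different route from the paper's. The paper works directly with the three-block decomposition $E=E_1\oplus E_2\oplus E_3$: it introduces two ``shear'' automorphisms $\lambda_1(x_1,x_2,x_3)=(x_1,\sigma(x_1)+x_2,x_3)$ and $\lambda_2(x_1,x_2,x_3)=(x_1+\sigma^{-1}(x_2),x_2,x_3)$, observes that $M$ is stable under $\lambda_i-I_E$, and in a short chain of substitutions deduces $\pi_i(M)\subseteq M$ for all three $i$ at once, without ever invoking Lemma \ref{new-imp}. You instead factor the argument in two stages: first you invoke Lemma \ref{new-imp} on the block decomposition $(E_1\oplus E_2)\oplus E_3$, powered by the neat observation that the matrix $\rho=\left[\begin{smallmatrix}1&1\\-1&0\end{smallmatrix}\right]\in M_2(\End E_1)$ is a unit with $1-\rho=\rho^{-1}$ also a unit (a universal identity valid over any ring), and then you split $N=M\cap(E_1\oplus E_2)$ with a second computation using the swap $\omega$ and sign-change $\delta$ together with $\rho$. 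Both arguments are sound; yours is slightly longer but has the conceptual payoff of isolating exactly why the hypothesis $E_1\cong E_2$ is what saves the day — the direct sum of two isomorphic injectives always carries an automorphism $\rho$ with $I-\rho$ invertible even when $E_1$ alone does not, which is precisely the obstruction illustrated by the paper's two-element-field example. One small remark: your second stage could also be handled by the paper's own shear computation specialized to the case $E_3=0$, which would make the whole argument a little closer to theirs; but the $\rho,\omega,\delta$ variant you use is equally valid.
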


\begin{proof}
Set $E(M)=E$. Let $E=E_1\oplus E_2\oplus E_3$. Let $\sigma:E_1\rightarrow E_2$ be an isomorphism and let $\pi_1: E\rightarrow E_1$, $\pi_2: E\rightarrow E_2$, and $\pi_3: E\rightarrow E_3$ be the canonical projections. Then $M\cap E_1\subseteq \pi_1(M)$, $M\cap E_2\subseteq \pi_2(M)$ and $M\cap E_3\subseteq \pi_3(M)$.

Let $\eta=\sigma^{-1}$. Consider the map $\lambda_1: E\rightarrow E$ given by $\lambda_1(x_1, x_2, x_3)=(x_1, \sigma(x_1)+x_2, x_3)$. Clearly, $\lambda_1$ is an automorphism of $E$. Since $M$ is automorphism-invariant, $M$ is invariant under $\lambda_1$ and $I_E$. Consequently, $M$ is invariant under $\lambda_1-I_E$. Thus $(\lambda_1-I_E)(M)\subseteq M$. Next, we consider the map $\lambda_2: E\rightarrow E$ given by $\lambda_2(x_1, x_2, x_3)=(x_1+\eta(x_2), x_2, x_3)$. This map $\lambda_2$ is also an automorphism of $E$. Thus, as explained above, $M$ is invariant under $\lambda_2-I_E$ too, that is $(\lambda_2-I_E)(M)\subseteq M$. 

Let $x=(x_1, x_2, x_3) \in M$. Then $(\lambda_1-I_E)(x)=(0, \sigma(x_1), 0) \in M$. Similarly, we have $(\lambda_2-I_E)(x)=(\eta(x_2), 0, 0) \in M$. This gives $(\lambda_1-I_E)(\eta(x_2), 0, 0)=(0, \sigma\eta(x_2), 0)=(0, x_2, 0) \in M$. Thus $\pi_2(M)\subseteq M$. Similarly, $(\lambda_2-I_E)(0, \sigma(x_1), 0)=(\eta\sigma(x_1), 0, 0)= (x_1, 0, 0)\in M$. Thus $\pi_1(M)\subseteq M$. This yields that $(0, 0, x_3)\in M$, that is, $\pi_3(M)\subseteq M$. This shows that $\pi_1(M)\oplus \pi_2(M)\oplus \pi_3(M)\subseteq M$ and therefore, $M=\pi_1(M)\oplus \pi_2(M)\oplus \pi_3(M)$. Hence $M=(M\cap E_1) \oplus (M\cap E_2) \oplus (M\cap E_3)$.  
\end{proof}

\bigskip

As a consequence of the above decomposition, we have the following for socle of an indecomposable automorphism-invariant module.

\begin{corollary} \label{soc}
If $M$ is an indecomposable automorphism-invariant right module over any ring $R$, then $\Soc(M)$ is square-free.
\end{corollary}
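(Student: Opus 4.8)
The plan is to argue by contradiction, reducing everything to a single application of Lemma \ref{tuk}. Suppose $\Soc(M)$ is not square-free. The first step is to extract from this a pair of isomorphic, independent \emph{simple} submodules of $M$. By definition of square-free, there is a non-zero submodule $N\subseteq\Soc(M)$ with $N\cong X\oplus X$ for some $X\neq 0$. Since $N$ lies in the socle it is semisimple, hence so is $X$, and therefore $X$ contains a simple submodule $S$. Transporting $S\oplus S$ back through the isomorphism $N\cong X\oplus X$ shows that $\Soc(M)$ — and hence $M$ — contains a submodule of the form $S_1\oplus S_2$ with $S_1\cong S_2$ simple and $S_1\cap S_2=0$.

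The second step is to lift this to a decomposition of the injective hull. Realize an injective hull $F$ of $S_1\oplus S_2$ as a submodule of $E(M)$; being injective, $F$ is a direct summand, so $E(M)=F\oplus E_3$ for some $E_3$. Using $E(S_1\oplus S_2)=E(S_1)\oplus E(S_2)$, write $F=F_1\oplus F_2$ where $F_i$ is an injective hull of $S_i$ inside $F$. Since $S_1\cong S_2$ we get $F_1\cong F_2$, so $E(M)=F_1\oplus F_2\oplus E_3$ with $F_1\cong F_2$, which is precisely the hypothesis of Lemma \ref{tuk}.

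The third step is to invoke Lemma \ref{tuk}, which gives $M=(M\cap F_1)\oplus(M\cap F_2)\oplus(M\cap E_3)$. Because $S_i$ is essential in $F_i$ we have $S_i\subseteq F_i$, and $S_i\subseteq M$, so $0\neq S_i\subseteq M\cap F_i$ for $i=1,2$. Thus $M$ is a direct sum of at least two non-zero summands, contradicting the indecomposability of $M$. Hence $\Soc(M)$ must be square-free.

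The only mildly delicate point is the first reduction — turning ``$\Soc(M)$ contains a non-zero square'' into ``$\Soc(M)$ contains a copy of $S\oplus S$ with $S$ simple'' — but this is immediate from semisimplicity of the socle, and everything afterwards is a direct consequence of Lemma \ref{tuk}, so I do not anticipate any real obstacle.
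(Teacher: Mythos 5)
Your proof is correct and follows essentially the same route as the paper: reduce to a pair of independent isomorphic simple submodules $S_1\cong S_2$ in $M$, lift to a decomposition $E(M)=E(S_1)\oplus E(S_2)\oplus E_3$ with isomorphic first two summands, and apply Lemma \ref{tuk} to contradict indecomposability. The only difference is that you spell out the reduction from ``not square-free'' to ``contains $S\oplus S$ with $S$ simple'' and the nonvanishing of $M\cap F_i$, both of which the paper leaves implicit.
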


\begin{proof}
Let $M$ be an indecomposable automorphism-invariant module. Suppose $M$ has two isomorphic simple submodules $S_1$ and $S_2$. Then $E(M)=E_1\oplus E_2\oplus E_3$, where $E_1=E(S_1), E_2=E(S_2)$ and $E_1\cong E_2$. By Lemma \ref{tuk}, $M$ decomposes as $M=(M\cap E_1) \oplus (M\cap E_2) \oplus (M\cap E_3)$, a contradiction to our assumption that $M$ is indecomposable. Hence $\Soc(M)$ is square-free.  
\end{proof}

\bigskip

Next, we have the following for any indecomposable semi-artinian automorphism-invariant module.

\begin{corollary} \label{tuk2}
Let $R$ be any ring and let $M$ be any indecomposable semi-artinian automorphism-invariant right $R$-module. Then one of the following statements holds:
\begin{enumerate}[(i)]
\item $M$ is uniform and quasi-injective.
\item Any simple submodule $S$ of $M$ has identity as its only automorphism.
\end{enumerate}
\end{corollary}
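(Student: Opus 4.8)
The plan is to use that $M$ is semi-artinian to control $E(M)$, and then to split into two cases according to whether $\Soc(M)$ is simple. First I would record that every nonzero submodule of a semi-artinian module contains a simple submodule, so $\Soc(M)\subseteq_e M$ and hence $E(M)=E(\Soc(M))$; moreover, by Corollary \ref{soc}, $\Soc(M)$ is square-free. If $\Soc(M)$ is simple, then $\Soc(M)\subseteq_e M$ forces $M$ to be uniform, so $M$ satisfies C1 trivially, i.e. $M$ is a $\CS$ module; being automorphism-invariant as well, $M$ is quasi-injective by the last of the Lee--Zhou facts recalled in Section 2. This is alternative (i).

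Now suppose $\Soc(M)$ is not simple; I aim at (ii). Fix an arbitrary simple submodule $S$ of $M$. Then $S\subseteq\Soc(M)$, and writing $\Soc(M)=S\oplus T$ we have $T\neq 0$ (else $\Soc(M)=S$ would be simple), while square-freeness ensures no simple summand of $T$ is isomorphic to $S$. Realize an injective hull $E_1$ of $S$ as a submodule of $E(M)$ and write $E(M)=E_1\oplus E_2$; since $\Soc(E_1)=S$ and no simple summand of $T$ is isomorphic to $S$, the projection of $T$ onto $E_1$ vanishes, so $T\subseteq E_2$. Suppose toward a contradiction that $S$ has an automorphism $\sigma\neq I_S$. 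Extend $\sigma$ to $\sigma_1\in\End(E_1)$; since the restriction of $\sigma_1$ to $S$ equals $\sigma$, which is injective, $S\subseteq_e E_1$, and $E_1$ is indecomposable injective, it follows that $\sigma_1$ is a monomorphism of $E_1$ whose (injective, hence direct-summand) image is all of $E_1$, i.e. $\sigma_1$ is an automorphism of $E_1$. As $S$ is simple and $\sigma\neq I_S$, the map $I_S-\sigma$ is again an automorphism of $S$, and the same argument applied to $I_{E_1}-\sigma_1$ (which restricts to $I_S-\sigma$ on $S$) shows that $I_{E_1}-\sigma_1$ is an automorphism of $E_1$. Lemma \ref{new-imp} now gives $M=(M\cap E_1)\oplus(M\cap E_2)$, where $S\subseteq M\cap E_1$ and $T\subseteq M\cap E_2$ are both nonzero, contradicting the indecomposability of $M$. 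Hence $S$ has $I_S$ as its only automorphism, which is (ii).

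The routine ingredients here are the standard facts about semi-artinian modules and about extending maps to injective hulls of simple modules. The step that needs care is choosing the decomposition $E(M)=E_1\oplus E_2$ so that $T$, and therefore the summand $M\cap E_2$, is genuinely nonzero — this is precisely where the square-freeness of $\Soc(M)$ from Corollary \ref{soc} is used. The driving mechanism is Lemma \ref{new-imp}: a nonidentity automorphism of a simple module is exactly what produces an automorphism $\sigma_1$ of a summand of $E(M)$ with $I_{E_1}-\sigma_1$ also an automorphism, and the nonexistence of such automorphisms over the two-element field is just what makes that field exceptional (compare the Example and Corollary \ref{imp3}).
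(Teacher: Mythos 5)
Your proof is correct, and it uses the same essential machinery as the paper (square-free socle via Corollary \ref{soc}, and Lemma \ref{new-imp} applied to a nonidentity automorphism of a simple submodule extended to an automorphism of $E(S)$), but the case split and some details are organized differently. The paper fixes an arbitrary simple submodule $S$ and cases on whether $|\End(S)|>2$: if so, it invokes Lemma \ref{new-imp} together with indecomposability to conclude $M$ is uniform, then derives quasi-injectivity directly from the fact that $\End(E(M))$ is a local ring (so for every $\alpha$, either $\alpha$ or $I-\alpha$ is an automorphism, and in either case $\alpha(M)\subseteq M$); the case $|\End(S)|=2$ then yields (ii) as the contrapositive. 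You instead case on whether $\Soc(M)$ is simple. When it is, you get uniformity for free from the essentiality of the socle (a semi-artinian fact the paper does not need to invoke), and then use the Lee--Zhou fact that $\CS$ plus automorphism-invariant implies quasi-injective. When $\Soc(M)$ is not simple, you run essentially the paper's Lemma \ref{new-imp} argument, but instead of concluding uniformity you locate a nonzero piece $T$ of the socle inside $E_2$ (using square-freeness to see the projection of $T$ onto $E_1$ vanishes) to force a genuine decomposition of $M$ and hence a contradiction. Both routes are valid; yours trades the local-endomorphism-ring argument for the $\CS$ criterion, and makes the indecomposability contradiction explicit rather than leaving (ii) as a contrapositive. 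One small point worth being explicit about: the claim that semi-artinian implies $\Soc(M)\subseteq_e M$ (every nonzero submodule contains a simple) is true but is a step up from the paper's weaker use of $\Soc(M)\neq 0$; it follows from the Loewy filtration, and your argument genuinely relies on it for the first case.
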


\begin{proof}
Let $M$ be an indecomposable semi-artinian automorphism-invariant right $R$-module. Since $M$ is semi-artinian, $\Soc(M)\neq 0$. By Corollary \ref{soc}, we know that $\Soc(M)$ is square-free. Suppose $S$ is a simple submodule of $M$. Now $D = \End(S)$ is a division ring. 

Suppose $|D|>2$. Then there exists a $\sigma \in D$ such that $\sigma \neq 0$ and $\sigma \neq I_S$. Then $I_S-\sigma$ is an automorphism of $S$. Let $E=E(M)$ and $E_1=E(S)\subseteq E$. Then $E=E_1\oplus E_2$ for some submodule $E_2$ of $E$. Let $\sigma_1 \in \End(E_1)$ be an extension of $\sigma$. Then $\sigma_1$ is an automorphism of $E_1$ and $(I_{E_1}-\sigma_1)(S)=(I_S-\sigma)(S)\neq 0$. Hence $I_{E_1}-\sigma_1$ is an automorphism of $E_1$. Thus, by Lemma \ref{new-imp}, $M=(M\cap E_1)\oplus (M\cap E_2)$. As $M$ is indecomposable, we must have $M=M\cap E_1$. Therefore, $M$ is uniform. Then $\End(E(M))$ is a local ring. Therefore for any $\alpha \in \End(E(M))$, $\alpha$ is an automorphism or $I-\alpha$ is an automorphism. In any case $\alpha(M)\subseteq M$. Therefore $M$ is quasi-injective. 

Now, if $M$ is not uniform then $|D|=2$, that is $D=\End(S)\cong \frac{\mathbb Z}{2\mathbb Z}$. In this case, the only automorphism of $S$ is the identity automorphism.   
\end{proof}

\begin{remark}
Recall that an algebra $A$ is said to be of {\it finite module type} if $A$ has only a finite number of non-isomorphic indecomposable right modules. In regard to Corollary \ref{soc}, we would like to mention here that Curtis and Jans proved that if $A$ is an algebra over an algebraically closed field $\mathbb F$ such that for each indecomposable right $A$-module $M$, $\Soc(M)$ is square-free, then $A$ is of finite module type (see \cite{CJ}). This was extended by Dickson and Fuller who proved that if $A$ is an algebra over any field $\mathbb F$ such that $A$ is of right invariant module type then $A$ has finite module type \cite{DF}.
\end{remark}

\bigskip

We call a ring $R$ to be of {\it right automorphism-invariant type} (in short, RAI-type), if every finitely generated indecomposable right $R$-module is automorphism-invariant. We would like to understand the structure of right artinian rings of RAI-type.   

\begin{lemma} \label{l1}
Let $R$ be a right artinian ring of RAI-type. Let $e\in R$ be an indecomposable idempotent such that $eR$ is not uniform. Let $A$ be a right ideal contained in $\Soc(eR)$. Then $\Soc(eR)=A\oplus A'$ where $A'$ has no simple submodule isomorphic to a simple submodule of $A$ and $eR/A'$ is quasi-projective.
\end{lemma}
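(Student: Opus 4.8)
The plan is to use that $eR$ is finitely generated and indecomposable, hence automorphism-invariant by the RAI-type hypothesis, so that Corollary~\ref{soc} applies and $\Soc(eR)$ is square-free; since $R$ is right artinian, $eR$ has finite length and therefore $\Soc(eR)=S_1\oplus\dots\oplus S_k$ with the $S_i$ pairwise non-isomorphic simple modules (and $k\geq 2$ because $eR$ is not uniform). A submodule of a multiplicity-free semisimple module is the sum of the homogeneous components whose isomorphism type occurs in it, so if $I\subseteq\{1,\dots,k\}$ is the set of indices $i$ with $S_i$ embeddable in $A$, then $A=\bigoplus_{i\in I}S_i$. Put $A'=\bigoplus_{i\notin I}S_i$. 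Then $\Soc(eR)=A\oplus A'$, and since the $S_i$ are pairwise non-isomorphic, no simple submodule of $A'$ is isomorphic to a simple submodule of $A$; this settles the first assertion.

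For the second assertion I would first show that $A'$ is a fully invariant submodule of $eR$, i.e.\ $\phi(A')\subseteq A'$ for every $\phi\in\End(eR)$. For such a $\phi$, the image $\phi(\Soc(eR))$ is a semisimple submodule of $eR$ and hence contained in $\Soc(eR)$, so $\phi$ restricts to an endomorphism of $S_1\oplus\dots\oplus S_k$; because $\operatorname{Hom}(S_i,S_j)=0$ for $i\neq j$, any simple submodule of $S_1\oplus\dots\oplus S_k$ isomorphic to $S_i$ equals $S_i$, whence $\phi(S_i)\subseteq S_i$ for each $i$. Consequently $\phi(A')=\phi\bigl(\bigoplus_{i\notin I}S_i\bigr)\subseteq\bigoplus_{i\notin I}S_i=A'$, as required.

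Finally I would invoke the standard fact that a quotient of a projective module by a fully invariant submodule is quasi-projective, applied to the projective module $eR$ and to $A'$. Concretely: given $A'\subseteq L\subseteq eR$ and a homomorphism $f : eR/A'\to (eR/A')/(L/A')$, form the composite $eR\twoheadrightarrow eR/A'\xrightarrow{f}(eR/A')/(L/A')\cong eR/L$, lift it through the projection $eR\twoheadrightarrow eR/L$ using projectivity of $eR$ to get $\phi\in\End(eR)$, and note that $\phi(A')\subseteq A'$ lets $\phi$ descend to $\bar\phi\in\End(eR/A')$; a routine diagram chase then shows that $\bar\phi$ factors $f$ through the canonical projection $eR/A'\to(eR/A')/(L/A')$. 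Hence $eR/A'$ is quasi-projective.

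The step that carries the real weight is the full invariance of $A'$, and it is precisely here that the RAI-type hypothesis enters (through Corollary~\ref{soc}): without square-freeness of $\Soc(eR)$ an endomorphism of $eR$ could permute or mix the simple summands inside a homogeneous component, and $A'$ would in general fail to be invariant. The remaining ingredients — the structure of submodules of a multiplicity-free semisimple module and the lifting argument for quasi-projectivity — are routine.
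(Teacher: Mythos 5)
Your proof is correct and follows essentially the same route as the paper's: use Corollary~\ref{soc} to get square-freeness of $\Soc(eR)$, deduce that $A'$ is fully invariant in $eR$ (the paper phrases this as $ereA'\subseteq A'$ for all $ere\in eRe$, which is the same thing since $\End(eR_R)\cong eRe$ acting by left multiplication), and conclude quasi-projectivity of $eR/A'$ because it is a quotient of a projective by a fully invariant submodule. You simply spell out the multiplicity-free decomposition of $\Soc(eR)$ and the lifting diagram in more detail than the paper's three-sentence argument does.
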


\begin{proof}
As $\Soc(eR)$ is square-free, $\Soc(eR)=A\oplus A'$ where $A'$ has no simple submodule isomorphic to a simple submodule of $A$. If for some $ere\in eRe$, $ereA' \not\subseteq A'$, then for some minimal right ideal $S\subset A'$, $ereS\not\subseteq A'$. This gives that $S$ is isomorphic to a simple submodule contained in $A$, a contradiction. Hence $eR/A'$ is quasi-projective.     
\end{proof}

\begin{lemma} \label{l2}
Let $R$ be a right artinian ring of RAI-type. Then any uniserial right $R$-module is quasi-projective.
\end{lemma}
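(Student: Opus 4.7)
Let $U$ be a uniserial right $R$-module. Since $R$ is right artinian, $U$ has finite composition length; we may assume $U\neq 0$. Being uniserial, $U$ is indecomposable (its submodules form a chain, so no nontrivial direct-sum decomposition is possible) and cyclic (any element outside the unique maximal submodule generates $U$). Hence $U$ is finitely generated indecomposable, and by the RAI-type hypothesis $U$ is automorphism-invariant.

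Next I would upgrade this to quasi-injectivity. Since $U$ is uniserial it is uniform, so $E(U)$ is indecomposable and $\End_R(E(U))$ is local. The argument in Corollary~\ref{tuk2}(i) then applies verbatim: for any $\alpha\in\End_R(E(U))$, either $\alpha$ or $I_{E(U)}-\alpha$ is an automorphism of $E(U)$, and automorphism-invariance of $U$ forces $\alpha(U)\subseteq U$ in either case. Consequently $U$ is quasi-injective.

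For quasi-projectivity, write $U=eR/K$ where $eR$ is the projective cover of $U$ (so $e$ is a primitive idempotent and $K\subseteq eJ(R)$ is superfluous). Over the semi-perfect ring $R$, it suffices to show that $K$ is stable under $\End_R(eR)\cong eRe$ acting by left multiplication, i.e., $aK\subseteq K$ for every $a\in eRe$. I would argue by contradiction: supposing some $a\in eRe$ satisfies $aK\not\subseteq K$, the image of $aK+K$ in $U=eR/K$ is a nonzero submodule, which by uniseriality equals $U_i$ for a unique $i\geq 1$, where $0=U_0\subsetneq U_1\subsetneq\cdots\subsetneq U_n=U$ is the composition series of $U$. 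This yields a nonzero $R$-homomorphism $\bar\phi_a\colon K\to U_i\subseteq U$. The plan is then to use the uniserial structure (which forces every kernel and image of an $R$-map involving $U$ to be a member of this chain) together with the quasi-injectivity of $U$ (which extends isomorphisms between submodules of $U$ to endomorphisms of $U$) to promote $\bar\phi_a$ to an endomorphism of $U$. Lifting that endomorphism to $eR$ via projectivity produces an element $a'\in eRe$ with $a'K\subseteq K$ and $a-a'$ belonging to the preimage of $U_i$ in $eR$, forcing $aK\subseteq K$ and contradicting the choice of $a$.

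The main obstacle is this final step. Because $K$ lies inside $eR$ rather than inside $U$, the quasi-injectivity of $U$ does not apply to $\bar\phi_a$ directly. One must first identify the isomorphism class of $\bar\phi_a$ with an isomorphism between two specific links of the composition series of $U$, using uniseriality to make the matching unambiguous, and only then invoke quasi-injectivity of $U$ to produce the required endomorphism. The uniserial structure is essential here: it restricts the possibilities for $\bar\phi_a$ so severely that the matching is forced, which is precisely what allows the lift back to $eR$ to contradict the failure of invariance.
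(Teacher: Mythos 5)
Your opening steps are fine: a uniserial module over a right artinian ring has finite length, is indecomposable and cyclic, hence automorphism-invariant by the RAI hypothesis, and since it is uniform the local-endomorphism-ring argument of Corollary~\ref{tuk2}(i) does upgrade this to quasi-injectivity. The reduction of quasi-projectivity to the $eRe$-stability of $K$ in the projective cover $eR/K\cong U$ is also a legitimate (Wu--Jans/Koehler) criterion. But the proof stops exactly where the lemma becomes nontrivial: you never actually prove $aK\subseteq K$, and you concede as much. The sketched strategy cannot work as stated, because $\bar\phi_a\colon K\to U$ is defined on a submodule of $eR$, not of $U$; quasi-injectivity of $U$ says nothing about extending such a map, and there is no canonical isomorphism between $K$ and a link of the composition chain of $U$ ($eR$ itself need not be uniserial, and $K$ need not resemble any $U_i$). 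More fundamentally, your argument only ever invokes the RAI hypothesis on $U$ itself, whereas the conclusion genuinely requires applying it to other modules.

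The paper's proof fills this gap by a different mechanism. It argues by induction on $l(A)$: given $\sigma\colon A\to A/C$ with $C\neq 0$, the socle lies in $\Ker\sigma$, the induced map $A/\Soc(A)\to A/C$ lifts by induction to $\bar\eta\colon A/\Soc(A)\to A/\Soc(A)$, and one then forms the graph-type submodule $N_1=zR$ of $A\times A$ with $z=(x,y)$, $A=xR$, $\bar\eta(x+F)=y+F$. The module $N_1$ is local, hence finitely generated indecomposable, hence automorphism-invariant by RAI-type, hence has square-free socle by Corollary~\ref{soc}; this forces $N_1$ to be uniform, therefore uniserial, therefore isomorphic to $A$ under the first projection, which produces the desired lift $\lambda\colon A\to A$ of $\sigma$. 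It is precisely this application of the hypothesis to the auxiliary local module inside $A\times A$ that your outline is missing, and without it (or some substitute) the key step of your plan remains unproven.
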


\begin{proof}
Let $A$ be a uniserial right $R$-module with composition length $l(A)=n\ge 2$. We will prove the result by induction. Suppose first that $n=2$. In this case, we can take $J(R)^2=0$. For some indecomposable idempotent $e\in R$, we have $A\cong eR/B$ for some $B\subseteq \Soc(eR)$. By Lemma \ref{l1}, $A$ is quasi-projective. 

Now consider $n>2$ and assume that the result holds for $n-1$. Let $0\neq \sigma: A\rightarrow A/C$ be a homomorphism where $C\neq 0$. Suppose $\sigma$ cannot be lifted to a homomorphism  $\eta: A\rightarrow A$. Let $F=\Soc(A)$. Then $F\subseteq \Ker(\sigma)$. We get a mapping $\bar{\sigma}: \frac{A}{F}\rightarrow \frac{A}{C}$. By the induction hypothesis, there exists a homomorphism $\bar{\eta}:\frac{A}{F}\rightarrow \frac{A}{F}$ such that $\bar{\sigma}=\pi \bar{\eta}$, where $\pi: \frac{A}{F}\rightarrow \frac{A}{C}$ is a natural homomorphism. 

Let $M=A\times A$, and $N=\{(a, b) \in M: \overline{\eta}(a+F)=b+F\}$. Then $N$ is a submodule of $M$. Now there exist elements $x\in A$ and indecomposable idempotent $e\in R$ such that $A = xR$ and $xe = x$. Fix an element $y\in A$ such that $\bar{\eta}(x + F) = y + F$ and $ye = y$. Set $z = (x, y)$. Then $z \in N$ and $N_1 = zR$ is local. Let $\pi_1, \pi_2$ be the associated projections of $M$ onto the first and second components of $M$, respectively. Then $\pi_1(N_1) = A$. 

Now, we claim that $N_1$ is uniserial. If $N_1$ is not uniform, then $\Soc(N_1)=\Soc(M)$. Therefore $\Soc(N_1)$ is not square-free, which is a contradiction by Lemma \ref{soc}. Thus $N_1$ is uniform. It follows that $N_1$ embeds in $A$ under $\pi_1$ or $\pi_2$. Hence $N_1$ is uniserial. As $\pi_1(N_1) = A$, and $l(N_1)\le l(A)$, it follows that $\pi_1|_{N_1}$ is an isomorphism. Thus given any $x \in A$, there exists a unique $y \in A$ such that $(x, y) \in N_1$. We get a homomorphism $\lambda: A \rightarrow A$ such that $\lambda(x) = y$ if and only
if $(x, y) \in N_1$. Clearly $\lambda$ lifts $\overline{\eta}$ and hence it also lifts $\sigma$. This proves that $A$ is quasi-projective. 
\end{proof}

\begin{lemma} \label{l3}
Let $R$ be a right artinian ring of RAI-type. Let $A_R$ be any uniserial module. Then the rings of endomorphisms of different composition factors of $A$ are isomorphic.
\end{lemma}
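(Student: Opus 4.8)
The plan is to show that every uniserial module over $R$ is simultaneously quasi-injective and quasi-projective, and then to read off the endomorphism ring of each composition factor from the endomorphism ring of a single suitable uniserial submodule of $A$.

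First I would record some structure. Since $R$ is right artinian, $J(R)$ is nilpotent, so for a uniserial module $A$ we have $AJ(R)\neq A$; as the submodules of $A$ are linearly ordered, $A/AJ(R)$ is simple, hence $A$ is cyclic and of finite length, with composition series $A=A_0\supsetneq A_1\supsetneq\cdots\supsetneq A_n=0$. For each $i$ the submodule $A_{i-1}$ is again uniserial; its unique maximal submodule is $A_i$, so $A_{i-1}J(R)=A_i$ and $A_{i-1}/A_{i-1}J(R)\cong S_i$, where $S_i=A_{i-1}/A_i$ denotes the $i$-th composition factor, while $\Soc(A_{i-1})=A_{n-1}\cong S_n$.

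Next I would argue that each $A_{i-1}$ is quasi-injective and quasi-projective. Being finitely generated and indecomposable, $A_{i-1}$ is automorphism-invariant because $R$ is of RAI-type; being uniserial it is uniform, so $E(A_{i-1})$ is indecomposable and $\End(E(A_{i-1}))$ is local. Hence for any $\phi\in\End(E(A_{i-1}))$ either $\phi$ or $I-\phi$ is an automorphism of $E(A_{i-1})$, and in either case $\phi(A_{i-1})\subseteq A_{i-1}$ (this is exactly the argument already used in the proof of Corollary~\ref{tuk2}); thus $A_{i-1}$ is quasi-injective, and it is quasi-projective by Lemma~\ref{l2}.

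Finally I would compute $\End(A_{i-1})/J(\End(A_{i-1}))$ in two ways. Since $A_{i-1}$ has finite length, an endomorphism is a unit if and only if it is injective if and only if it is surjective, so $J(\End(A_{i-1}))$ is precisely the set of non-injective (equivalently, non-surjective) endomorphisms. Restriction to the fully invariant submodule $\Soc(A_{i-1})\cong S_n$ gives a ring homomorphism $\End(A_{i-1})\to\End(S_n)$, which is onto because $A_{i-1}$ is quasi-injective and whose kernel consists of the endomorphisms with nonzero (hence, by uniformity, essential) kernel, that is, $J(\End(A_{i-1}))$. Dually, passing to the fully invariant quotient $A_{i-1}/A_{i-1}J(R)\cong S_i$ gives a ring homomorphism $\End(A_{i-1})\to\End(S_i)$, which is onto because $A_{i-1}$ is quasi-projective and whose kernel consists of the endomorphisms with image contained in $A_{i-1}J(R)$, i.e. the non-surjective ones, again $J(\End(A_{i-1}))$. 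Therefore $\End(S_i)\cong\End(A_{i-1})/J(\End(A_{i-1}))\cong\End(S_n)$ for every $i$, so all composition factors of $A$ have isomorphic endomorphism rings. The one point that needs a little care is the identification of the kernels of these two homomorphisms with the Jacobson radical of $\End(A_{i-1})$; this rests on the finite-length fact that injective, surjective and invertible coincide for endomorphisms, together with the remark that in a uniform module every nonzero submodule, in particular every nonzero kernel, is essential and hence contains the socle.
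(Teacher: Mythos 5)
Your argument is correct, and it takes a genuinely different and cleaner route than the paper. The paper handles the length-$2$ case by passing to $\overline{R}=R/\operatorname{ann}_r(A)$, realizing $A$ as $\overline{e}\,\overline{R}$ via quasi-projectivity, and then explicitly building an embedding $eRe/eJ(R)e\hookrightarrow fRf/fJ(R)f$ whose surjectivity it checks by an element computation using quasi-injectivity of $\overline{e}\,\overline{R}$; the general case then follows by induction on length. You instead apply, for each $i$, the two canonical ring homomorphisms $\End(A_{i-1})\to\End(\Soc A_{i-1})$ and $\End(A_{i-1})\to\End(A_{i-1}/\operatorname{rad}A_{i-1})$, use quasi-injectivity (for the first) and quasi-projectivity (for the second) to get surjectivity, and identify both kernels with $J(\End(A_{i-1}))$ via Fitting's lemma, so both target division rings are $\End(A_{i-1})/J(\End(A_{i-1}))$. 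This avoids the passage to $\overline{R}$, the appeal to the fact that a finitely generated quasi-projective module over an artinian ring is projective over the factor by its annihilator, and the explicit element bookkeeping, at the cost of using a bit more of the structure theory of endomorphism rings of finite-length modules. Your prerequisites check out: each $A_{i-1}$ is cyclic (hence finitely generated) and uniserial, so it is indecomposable, uniform, and of finite length since $R$ is right artinian; RAI-type makes it automorphism-invariant, the local-endomorphism-ring-of-$E(A_{i-1})$ argument upgrades this to quasi-injective exactly as in Corollary~\ref{tuk2}, and Lemma~\ref{l2} gives quasi-projectivity. One could tighten the exposition slightly by noting explicitly that $\Soc(A_{i-1})$ and $A_{i-1}J(R)$ are fully invariant so that restriction and passage to the quotient really are ring homomorphisms, but you do gesture at this and the rest is sound.
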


\begin{proof}
Let $A$ be a uniserial right $R$-module with $l(A)=2$. Let $C=ann_r(A)$ and $\overline{R}=R/C$. As $A_R$ is quasi-projective, $A$ is a projective $\overline{R}$-module. Thus there exists an indecomposable idempotent $e\in R$ such that $A\cong \overline{e} \overline{R}$. As $\overline{R}$ embeds in a finite direct sum of copies of $A$, there exists an indecomposable idempotent $f\in R$ such that $\Soc(A)\cong \frac{\overline{fR}}{\overline{fJ(R)}}$, $\overline{e}\overline{J(R)}=\overline{exfR}$ for some $x\in J(R)$. We get an embedding $\sigma: \frac{eRe}{eJ(R)e}\rightarrow \frac{fRf}{fJ(R)f}$ defined as $\sigma(ere+eJ(R)e)=fr'f+fJ(R)f$ whenever $\overline{ere}\overline{exf}=\overline{exf}\overline{fr'f}$; $ere\in eRe, fr'f \in fRf$. Let $z=fvf\in fRf$. We get an $\overline{R}$-homomorphism $\eta: \overline{e} \overline{J(R)}\rightarrow \overline{e} \overline{J(R)}$ such that $\eta(\overline{exf})=\overline{exf} \overline{fvf}$. As $\overline{e}\overline{R}$ is quasi-injective, there exists an $\overline{R}$-homomorphism $\lambda: \overline{e} \overline{R}\rightarrow \overline{e} \overline{R}$ extending $\eta$. Now $\lambda(\overline{e})=\overline{ere}$ for some $r\in R$. Then $\overline{ere}\overline{exf}=\lambda(\overline{exf})=\eta(\overline{exf})= \overline{exf}\overline{fvf}$, which gives that $\sigma$ is onto. Hence $\frac{eRe}{eJ(R)e}\cong \frac{fRf}{fJ(R)f}$. Thus the result holds whenever $l(A) = 2$. If $l(A) = n > 2$, the result follows by induction on $n$.           
\end{proof}

\begin{lemma} \label{l4}
Let $R$ be a right artinian ring of RAI-type. Then we have the following. 
\begin{enumerate}[(i)]
\item Let $D$ be a division ring and $x\in R$. Let $xR$ be a local module such that for any simple submodule $S$ of $\Soc(xR)$, $D=\End(S)$. Then $\End(xR/xJ(R))\cong D$.
\bigskip
\item Let $xR$ be a local module and $D=\End(xR/xJ(R))$ where $x\in R$. Then $\End(S)\cong D$ for every composition factor $S$ of $xR$.
\bigskip
\item Let $xR$, $yR$ be two local modules where $x, y\in R$. If $\End(xR/xJ(R))\not\cong \End(yR/yJ(R))$, then $Hom(xR, yR)=0$.
\end{enumerate}
\end{lemma}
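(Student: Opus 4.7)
My plan is to prove the three parts in sequence: (i) is the core argument, done by induction on composition length; (ii) then follows from (i) applied to a suitably chosen uniform local quotient and to the quotient $xR/M_{i-1}$ arising from a composition series; and (iii) reduces quickly to (ii).

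For part (i), I induct on $n = l(xR)$. The base cases are handled directly: if $n = 1$, then $S = T = xR$ and there is nothing to prove; if $n = 2$, then $xR$ is local of length $2$, hence uniserial with $\Soc(xR) = xJ(R)$ simple, so Lemma~\ref{l3} gives $\End(T) \cong \End(S) = D$. For the inductive step with $n \ge 3$, pick any simple $S \subseteq \Soc(xR)$ and consider $\bar{xR} = xR/S$, a finitely generated indecomposable local $R$-module with top $T$ and length $n-1$; by RAI-type $\bar{xR}$ is automorphism-invariant, and Corollary~\ref{soc} gives that $\Soc(\bar{xR})$ is square-free. The key observation is that every simple submodule $V \subseteq \Soc(\bar{xR})$ still satisfies $\End(V) \cong D$, so the inductive hypothesis applies to $\bar{xR}$. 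Indeed, $V$ lifts to a submodule $U \subseteq xR$ with $S \subsetneq U$ and $l(U) = 2$; a length-$2$ module containing $S$ is either semisimple, $U \cong S \oplus S'$ for some simple $S' \subseteq \Soc(xR)$ (giving $V \cong S'$ and $\End(V) = \End(S') = D$ by hypothesis), or uniserial with socle $S$ and top $V$, in which case Lemma~\ref{l3} gives $\End(V) \cong \End(S) = D$. Applying the inductive hypothesis to $\bar{xR}$ now yields $\End(T) \cong D$.

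For part (ii), I first handle the socle case: if $\End(T) = D$ then every simple $S \subseteq \Soc(xR)$ satisfies $\End(S) \cong D$. Take $N \subseteq xJ(R)$ maximal with $N \cap S = 0$, and set $A = xR/N$. By the maximality of $N$, any nonzero submodule of $A$ pulls back to a submodule of $xR$ properly containing $N$ and hence meeting $S$, which forces $\Soc(A) = (S+N)/N \cong S$. Thus $A$ is a finitely generated, indecomposable, uniform local module with top $T$, and part (i) applies: the hypothesis for (i) on $A$ holds trivially since $\Soc(A)$ has the single simple constituent $\overline{S}$ with $\End(\overline{S}) = \End(S)$, and its conclusion reads $\End(T) \cong \End(S)$; since $\End(T) = D$, we get $\End(S) = D$. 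For a general composition factor $S$ of $xR$, choose a composition series $0 = M_0 \subsetneq M_1 \subsetneq \cdots \subsetneq M_n = xR$ with $S \cong M_i/M_{i-1}$. If $i = n$ then $S \cong T$ and there is nothing to show; otherwise $M_{i-1} \subseteq xJ(R)$, so $xR/M_{i-1}$ is a finitely generated indecomposable local module with top $T$ (hence with $\End(T) = D$), and the image of $M_i$ in its socle realizes $S$; the socle case just established, applied to $xR/M_{i-1}$, yields $\End(S) = D$.

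For part (iii), let $\phi : xR \to yR$ be nonzero. Then $\phi(xR) = \phi(x)R$ is a nonzero cyclic submodule of $yR$, and $\phi$ induces a surjection from the simple module $T_x = xR/xJ(R)$ onto $\phi(xR)/\phi(xR)J(R)$; the codomain is nonzero, for otherwise Nakayama applied to the finitely generated module $\phi(xR)$ would force $\phi(xR) = 0$, contradicting $\phi \ne 0$. Hence $\phi(xR)/\phi(xR)J(R) \cong T_x$ and $\phi(xR)$ is a local submodule of $yR$ with top $T_x$, so $T_x$ is a composition factor of $yR$. Part (ii) applied to $yR$ then gives $\End(T_x) \cong \End(yR/yJ(R))$, contradicting the hypothesis; therefore $\operatorname{Hom}(xR, yR) = 0$. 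The main non-routine step in the whole argument is the case analysis inside the induction for (i), where the square-freeness of $\Soc(xR)$ together with Lemma~\ref{l3} is exactly what is needed to propagate the value $D$ from $\Soc(xR)$ to the new simples in $\Soc(xR/S)$; once this is in place, the bootstrap and the reductions in (ii) and (iii) are essentially formal.
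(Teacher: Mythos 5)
Your proof is correct, and it rests on the same key ingredients as the paper's: Lemma~\ref{l3} applied to length-$2$ uniserial subquotients to transport the endomorphism ring between adjacent composition factors, an induction for (i), and the reduction of (ii) to (i) via a uniform quotient $xR/B$ with simple socle. The one structural difference is in how the induction for (i) is run: the paper inducts on the Loewy length of $xR$ and works bottom-up, first establishing the claim for local submodules $yR$ with $y\in xJ(R)$ (so the inductive instances remain literal submodules of $R_R$) and then passing through $M=xR/xJ(R)^2$ and a uniserial quotient $M/B$; you instead induct on composition length and work top-down, quotienting by a simple $S\subseteq\Soc(xR)$ and verifying via the length-$2$ case analysis that every simple in $\Soc(xR/S)$ still has endomorphism ring $D$. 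Your variant is slightly cleaner in that it avoids the choice of $y$ and the semisimplicity argument for $\Soc(xR/xJ(R)^2)$, at the cost of applying the inductive hypothesis to cyclic local modules that are not literally of the form $xR$ with $x\in R$; this is harmless (the argument never uses that $xR\subseteq R_R$, and the paper itself applies (i) to the quotient $xR/B$ in its proof of (ii)), but strictly speaking the statement being proved by induction should be phrased for arbitrary cyclic local modules. Two small cosmetic points: the appeal to Corollary~\ref{soc} (square-freeness of the socle) in your step for (i) is never actually used, and in (ii) your justification that $\Soc(A)=(S+N)/N$ should note that a submodule of $xR$ properly containing $N$ but not contained in $xJ(R)$ must equal $xR$ by locality, so it too meets $S$. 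Part (iii), which the paper dismisses as immediate, is spelled out correctly.
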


\begin{proof}
\begin{enumerate}[(i)]
\item There exists an $n\ge 1$ such that $xJ(R)^n = 0$ but $xJ(R)^{n-1} \neq 0$. If $n = 1$, then $xR$ is simple, so the result holds. We apply induction on $n$. Suppose $n>1$ and assume that the result holds for $n-1$. Now $xJ(R)J(R)^{n-1} = 0$, but $xJ(R)J(R)^{n-2} \neq 0$. Therefore there exists an element $y \in xJ(R)$ such that $yR$ is local and $yJ(R)^{n-1}= 0$ but $yJ(R)^{n-2}\neq 0$. By the induction hypothesis, $\End(yR/yJ(R))\cong D$. In fact, for any simple submodule $S'$ of $eJ(R)/xJ(R)^2$, $\End(S')\cong D$. Consider the local module $M = xR/xJ(R)^2$. Let $S'$ be a simple submodule of $M$. Then $\Soc(M) = S' \oplus B$ for some $B\subset \Soc(M)$. Then $\End(S')\cong D$. As $A= M/B$ is uniserial, $\Soc(A)\cong S'$ and $A/AJ(R) \cong xR/xJ(R)$. By Lemma \ref{l3}, $\End(xR/xJ(R))\cong D$.  

\bigskip

\item Let $S$ be a simple submodule of $\Soc(xR)$ and $B$ be a complement of $S$ in $xR$. Then $\overline{xR}=xR/B$ is uniform and $\Soc(\overline{xR})\cong S$. By (i), $\End(S)\cong \End(\frac{\overline{xR}}{\overline{xJ(R)}})\cong \End(xR/xJ(R))=D$. Hence $\End(S)\cong D$ for any simple submodule $S$ of $xR$. Let $S_1$ be any composition factor of $xR$. Then there exists a local submodule $yR$ of $xR$ such that $S_1\cong yR/yJ(R)$. By (i), $\End(S_1)\cong \End(S)\cong D$, where $S$ is a simple submodule of $yR$.

\bigskip

\item It is immediate from (ii).  

\end{enumerate}
\end{proof}

Now, we are ready to give the structure of indecomposable right artinian rings of RAI-type.

\begin{theorem} \label{rai}
Let $R$ be an indecomposable right artinian ring of RAI-type. Then the following hold.

\bigskip

\begin{enumerate}[(i)]
\item There exists a division ring $D$ such that $\End(S)\cong D$ for any simple right $R$-module $S$. In particular, $R/J(R)$ is a direct sum of matrix rings over $D$.

\bigskip

\item If $D\not\cong \mathbb Z/2 \mathbb Z$, then every finitely generated indecomposable right $R$-module is quasi-injective. In this case, $R$ is right serial.
\end{enumerate}
\end{theorem}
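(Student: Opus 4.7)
My plan is to handle (i) and (ii) in sequence, leveraging Lemma \ref{l4} and Corollary \ref{tuk2}. Let $\{e_1,\dots,e_n\}$ be a complete set of primitive orthogonal idempotents with $\sum e_i=1$, write $S_i=e_iR/e_iJ(R)$, and set $D_i=\End(S_i)$, which is a division ring by Schur's lemma.

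For (i), I would show that all the $D_i$ are mutually isomorphic by exploiting the indecomposability of $R$ as a ring. Suppose, for contradiction, that $D_i\not\cong D_j$ for some pair $i,j$. Lemma \ref{l4}(iii), applied to the local modules $e_iR$ and $e_jR$, gives $\operatorname{Hom}(e_iR,e_jR)=0$; since the hypothesis is symmetric in $i,j$, applying the same lemma with the roles reversed yields $\operatorname{Hom}(e_jR,e_iR)=0$. Under the standard identification $\operatorname{Hom}(e_iR,e_jR)\cong e_jRe_i$, this reads $e_iRe_j=0=e_jRe_i$. Partitioning the idempotents into classes according to the isomorphism type of the corresponding $D_i$, the sum of the $e_i$ in any one class would then be a non-trivial central idempotent, contradicting indecomposability of $R$. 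Hence there is a single division ring $D$ with $D_i\cong D$ for all $i$, and Wedderburn--Artin forces $R/J(R)$ to be a direct sum of matrix rings over $D$.

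For the quasi-injectivity portion of (ii), assume $D\not\cong\mathbb Z/2\mathbb Z$, so $|D|>2$. Any finitely generated indecomposable right $R$-module $M$ is artinian, hence semi-artinian, and is automorphism-invariant by the RAI-type hypothesis, so Corollary \ref{tuk2} leaves only two possibilities: either $M$ is uniform and quasi-injective, or every simple submodule $S\subseteq M$ has identity as its only automorphism, i.e.\ $|\End(S)|=2$. Part (i) gives $\End(S)\cong D$, and $|D|>2$ rules out the second option, so $M$ is quasi-injective.

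For the right serial claim, I need to prove each indecomposable summand $e_iR$ of $R_R$ is uniserial. Given submodules $N_1,N_2\subseteq e_iR$, the quotient $e_iR/(N_1\cap N_2)$ is local, finitely generated, and indecomposable, so by the previous paragraph it is uniform. A direct inspection shows that the images of $N_1$ and $N_2$ in this quotient meet only in $0$: any element of the intersection is represented by some $x\in N_1$ with $x-y\in N_1\cap N_2$ for some $y\in N_2$, which forces $x\in N_2$ and hence $x\in N_1\cap N_2$. Uniformity of the quotient then forces one of the two images to vanish, so $N_1\subseteq N_2$ or $N_2\subseteq N_1$; thus $e_iR$ is uniserial and $R$ is right serial. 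The step I expect to be most delicate is the central-idempotent argument in (i), where one needs to simultaneously kill $e_iRe_j$ and $e_jRe_i$ by applying Lemma \ref{l4}(iii) symmetrically in order to obtain the desired block decomposition, and then correctly match the resulting block decomposition of the idempotents with a nontrivial central idempotent of $R$.
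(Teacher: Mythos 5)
Your proposal is correct and takes essentially the same route as the paper: part (i) uses Lemma \ref{l4}(iii) to kill $e_iRe_j$ and $e_jRe_i$ across classes of idempotents with non-isomorphic division rings, yielding a nontrivial central idempotent (the paper phrases this as a two-sided ideal decomposition $R=A\oplus B$) that contradicts indecomposability, and part (ii) applies Corollary \ref{tuk2} with $|D|>2$ and then the standard ``every local quotient of $eR$ is uniform, hence $eR$ is uniserial'' argument. You merely spell out details the paper leaves implicit.
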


\begin{proof}
\begin{enumerate}[(i)]
\item Let $e \in R$ be an indecomposable idempotent and $D = eRe/eJ(R)e$. By above lemma, every composition factor $S$ of $eR$ satisfies $\End(S)\cong D$. Now $R_R=\oplus_{i=1}^{n} e_iR$ where $e_i$ are orthogonal indecomposable idempotents with $e_1=e$. Let $A$ be the direct sum of those $e_jR$ for which $\frac{e_jRe_j}{e_jJ(R)e_j}\cong D$. Consider any $e_k$ for which $\frac{e_kRe_k}{e_kJ(R)e_k}\not \cong D$. It follows from Lemma \ref{l4}(iii) that $Ae_kR=0=e_kRA$. Consequently, $A=e_kR$ and we get that $R = A\oplus B$ for some ideal $B$. As $R$ is indecomposable, we get $R = A$. This proves (i). 

\bigskip

\item Suppose $D\not \cong \mathbb Z/2\mathbb Z$. It follows from Corollary \ref{tuk2} that every indecomposable right $R$-module is uniform and quasi-injective. In particular, if $e\in R$ is an indecomposable idempotent, then any homomorphic image of $eR$ is uniform, which gives that $eR$ is uniserial. Hence $R$ is right serial.
\end{enumerate}
\end{proof}

\begin{theorem} \cite{SA} \label{singh}
Let $R$ be a right artinian ring such that $J(R)^2 = 0$. If every finitely generated indecomposable right $R$-module is local, then $R$ satisfies the following conditions.
\begin{enumerate}[(a)]
\item Every uniform right $R$-module is either simple or is injective with composition length 2.
\item $R$ is a left serial ring.
\item For any indecomposable idempotent $e \in R$ either $eJ(R)$ is homogeneous or $l(eJ(R))\le 2$.
\end{enumerate}
Conversely, if $R$ satisfies (a), (b), (c) and $l(eJ(R))\le 2$, then every finitely generated indecomposable right $R$-module is local.
\end{theorem}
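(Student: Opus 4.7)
The plan is to treat the two implications separately. Throughout, $J(R)^2=0$ forces $MJ(R)\subseteq \Soc(M)$ for every right module $M$, so every local right module has length at most $2$.

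For (a), for any uniform $U$ and nonzero $x\in U$, the cyclic $xR$ is finitely generated uniform, hence local by hypothesis, and $xJ(R)\subseteq \Soc(xR)$ simple forces $l(xR)\le 2$. If some $xR$ has length $2$, then for any $y\in U$ the finitely generated uniform $xR+yR$ is local of length $\le 2$, forcing $xR+yR=xR$ and $U=xR$; otherwise every $xR$ is simple and uniformity of $U$ makes $U$ itself simple. In the length-$2$ case, running the same argument inside $E(U)$ yields $U+vR=U$ for every $v\in E(U)$, so $U=E(U)$ is injective.

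For (b) and (c), I would argue contrapositively by manufacturing non-local finitely generated indecomposables. For (b), if $Je$ is not simple as a left $R$-module, then either there exist non-isomorphic primitive idempotents $f_1,f_2$ with $f_iJe\ne 0$, or a single $f$ with $\dim fJe\ge 2$ over $fRf/fJf$. Each nonzero $a\in fJe$ yields a right module map $\varphi_a:eR\to fR$, $er\mapsto ar$, with image in $\Soc(fR)=fJ$, and the pushout $(f_1R\oplus f_2R)/\{(\varphi_{a_1}(x),-\varphi_{a_2}(x)):x\in eR\}$ (or the analogous diagonal glue of two copies of $fR$) is finitely generated with top of length $2$; the nontriviality of the socle identification forces indecomposability, contradicting the hypothesis. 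For (c), an entirely analogous amalgam of two copies of $eR$ along a non-homogeneous slice of $eJ\oplus eJ$ delivers the required non-local indecomposable when $eJ$ is non-homogeneous of length $\ge 3$.

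For the converse, let $M$ be finitely generated indecomposable and assume (a), (b), $l(eJ)\le 2$; choose a minimal generating set $x_1,\ldots,x_n$ lifting a basis of $M/MJ$ and aim to show $n=1$. Present $M=P/K$ with $P=\bigoplus P(S_i)$ the projective cover and $K\subseteq PJ$. Condition (b), via $\operatorname{Hom}_R(eR,fR)\cong fRe$, tells us that $fJe$ is one-dimensional for a unique $f$ and zero otherwise, sharply restricting the off-diagonal entries of $K$; condition (a) supplies the splitting engine, as every length-$2$ uniform submodule of $M$ is injective and hence splits off. Assuming $n\ge 2$, any mixed element of $K$ would produce such a length-$2$ uniform summand, whose injectivity contradicts the indecomposability of $M$ (or forces redundancy in $\{x_i\}$); hence $n=1$. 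The main obstacle will be the indecomposability verification of the pushouts in (b) and (c), where one must directly track that any decomposition would induce an $R$-linear splitting of the gluing map $eR\to fR\oplus fR$, which fails by the linear independence of the chosen socle elements; the converse is equally delicate, requiring the left-serial hypothesis to be used in a bimodule-sensitive way, mediated through the injectivity of length-$2$ uniform modules from (a).
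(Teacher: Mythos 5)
First, note that the paper does not prove this statement at all: Theorem~\ref{singh} is imported verbatim from the reference \cite{SA} (Singh and Al-Bleehed), so there is no in-paper proof to compare yours against; your attempt has to stand on its own. Your argument for part (a) does stand: every cyclic submodule of a uniform module is uniform, hence indecomposable, hence local by hypothesis, and $J(R)^2=0$ forces its length to be at most $2$; the absorption argument $U+vR=U$ inside $E(U)$ then gives injectivity. That part is complete and correct.

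The rest has genuine gaps. For (b) and (c) the contrapositive strategy (manufacture a non-local finitely generated indecomposable) is the right one, but the indecomposability of your glued modules is the entire content of the claim, and you explicitly defer it. For (b) it can be pushed through — one checks that every endomorphism of $f_1R\oplus f_2R$ preserving the diagonal simple $\Delta$ reduces, modulo the square-zero radical of the endomorphism ring, to a subring of $D_{f_1}\times D_{f_2}$ (or of $M_2(D_f)$) without nontrivial idempotents, using the left independence of $a_1,a_2$ over $D_f$ \emph{and} the fact that each $a_iD_e$ is one-dimensional on the right — but none of this bimodule bookkeeping appears in your sketch. For (c) the construction you describe is actually wrong as stated: an amalgam of two \emph{full} copies of $eR$ along a diagonal simple in $eJ\oplus eJ$ makes no use of $l(eJ(R))\ge 3$ and can decompose. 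Indeed, in the paper's $3\times3$ example $e_{11}J(R)$ is non-homogeneous of length $2$ and every finitely generated indecomposable is local, so your module must split there; the correct construction (visible in the paper's $4\times4$ example) glues two \emph{distinct proper quotients} $eR/S_i$ and $eR/S_j$ along a third simple $S_k$ common to their socles, which is exactly what requires $l(eJ(R))\ge 3$. Finally, the converse is only a program: the assertion that a ``mixed element of $K$'' yields a length-$2$ uniform summand contradicting indecomposability is not an argument, and this direction is the harder half of the theorem, requiring an induction that combines left seriality, $l(eJ(R))\le 2$, and the injectivity of length-$2$ uniforms from (a). As written, (b), (c) and the converse are not proved.
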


\bigskip

\noindent {\bf Example.} Let $R=\left[ 
\begin{array}{ccc}
\mathbb F & \mathbb F & \mathbb F \\ 
0 & \mathbb F & 0 \\ 
0 & 0 & \mathbb F  \\ 
\end{array}
\right] $ where $\mathbb F=\frac{\mathbb Z}{2\mathbb Z}$. \\
Then $R$ is a left serial ring. We have already seen that $e_{11}R$ is an indecomposable module which is automorphism-invariant but not quasi-injective. It follows from Theorem \ref{singh} that every finitely generated indecomposable right $R$-module is local. Thus the only indecomposable modules which are not simple are the homomorphic images of $e_{11}R$, which are $e_{11}R$, $\frac{e_{11}R}{e_{12}\mathbb F}$, and $\frac{e_{11}R}{e_{13}\mathbb F}$. These are all automorphism-invariant. It follows from Theorem \ref{singh} that any finitely generated indecomposable right $R$-module is local. Thus this ring $R$ is an example of a ring where every finitely generated indecomposable right $R$-module is automorphism-invariant.  ~\hfill$\square$    

\bigskip

\noindent {\bf Example.} Let $\mathbb F=\mathbb Z/2 \mathbb Z$ and $R=\left[ 
\begin{array}{cccc}
\mathbb F & \mathbb F & \mathbb F & \mathbb F\\ 
0 & \mathbb F & 0 & 0\\ 
0 & 0 & \mathbb F & 0 \\ 
0 & 0 & 0 & \mathbb F \\ 
\end{array}
\right] $. \\
This ring $R$ is left serial and $J(R)^2=0$. Now $e_{11} J(R)= e_{12}\mathbb F\oplus e_{13}\mathbb F\oplus e_{14}\mathbb F$, a direct sum of non-isomorphic minimal right ideals. It follows from condition (c) in Theorem \ref{singh} that there exists a finitely generated indecomposable right $R$-module that is not local. We have $E_1 = E(e_{12}\mathbb F)$, $E_2 = E(e_{13}\mathbb F)$, $E_3 = E(e_{14}\mathbb F)$, each of them has composition length 2. Now $e_{11}R$ has two homomorphic images $A_1=\frac{e_{11}R}{e_{14}\mathbb F}$ and $A_2=\frac{e_{11}R}{e_{12}\mathbb F}$ such that $\Soc(A_1)\cong e_{12}\mathbb F\oplus e_{13}\mathbb F$ and $\Soc(A_2)\cong e_{13}\mathbb F\oplus e_{14}\mathbb F$. So we get $B_1\subseteq  E_1\oplus E_2\subseteq E_1\oplus E_2\oplus E_3$ such that $A_1\cong B_1$. Similarly, we have $A_2\cong B2\subseteq E_2\oplus E3$. Let $E =E_1\oplus E2\oplus E_3$. Its only automorphism is $I_E$. Thus any essential submodule of $E$ is automorphism-invariant. Now $B = B_1 + B_2 \subset_e E$, so $B$ is automorphism-invariant and $B$ is
not local. We prove that $B$ is indecomposable. We have $B_1\cap B_2 = e_{13} \mathbb F$. Notice that any submodule of $E_1\oplus E_2$ that is indecomposable and not uniserial
is $B_1$. Suppose a simple submodule $S$ of $B$ is a summand of $B$. But $S\subset B_1$ or $S\subset B_2$, therefore $B_1$ or $B_2$ decomposes, which is a contradiction. As $l(B)= 5$, $B$ has a summand $C_1$ with $l(C_1) = 2$. Then $C_1$ being uniserial, it equals one of $E_i$.

Case 1. $C_1 = E_1$. Then $B = C_1\oplus C_2$, where $\Soc(C_2) \cong B_2$. As $C_2$ has no uniserial submodule of length 2, the projection of $B_1$ in $C_2$ equals $\Soc(C_2)$, we get $B_1$ is semi-simple, which is a contradiction.

Similarly other cases follow. Hence $B$ is indecomposable.   ~\hfill$\square$

\bigskip

\noindent Now, we proceed to answer the question of Lee and Zhou \cite{LZ} whether every automorphism-invariant module is pseudo-injective in the affirmative for modules with finite Goldie dimension. 

\begin{theorem} \label{GD}
If $M$ is an automorphism-invariant module with finite Goldie dimension, then $M$ is pseudo-injective. 
\end{theorem}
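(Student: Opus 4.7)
The plan is to extend an arbitrary monomorphism $f: N \to M$ from a submodule $N \subseteq M$ to an automorphism $\sigma$ of the injective hull $E(M)$; then automorphism-invariance of $M$ will force $\sigma(M) \subseteq M$, so $\sigma|_M$ is the desired endomorphism of $M$ extending $f$, establishing pseudo-injectivity.

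To construct $\sigma$, first identify $E(N)$ with an injective hull taken inside $E(M)$, which is possible since $N \subseteq M \subseteq E(M)$. As $E(M)$ is injective, $f$ extends to a map $g: E(N) \to E(M)$, and since $N \subseteq_e E(N)$ while $g|_N = f$ is injective, $g$ itself is a monomorphism. Then $g(E(N))$ is an injective submodule of $E(M)$ and hence a direct summand, say $E(M) = g(E(N)) \oplus K$. On the other hand, $E(N)$ is also injective and a summand of $E(M)$, so $E(M) = E(N) \oplus E(N)'$ for some complement $E(N)'$.

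The key step is to compare these two decompositions. Since $M$ has finite Goldie dimension and $M \subseteq_e E(M)$, $E(M)$ also has finite Goldie dimension and is therefore directly-finite, so by the fact cited in the introduction (\cite[Theorem 1.29]{MM}) $E(M)$ has the internal cancellation property. Applying this to $E(M) = E(N) \oplus E(N)' = g(E(N)) \oplus K$ together with the isomorphism $E(N) \cong g(E(N))$ induced by $g$, I obtain an isomorphism $\phi: E(N)' \to K$. The map $\sigma = g \oplus \phi: E(N) \oplus E(N)' \to g(E(N)) \oplus K$ is then an automorphism of $E(M)$ agreeing with $f$ on $N$, and $\sigma(M) \subseteq M$ by automorphism-invariance, producing the required extension of $f$ to an endomorphism of $M$. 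The only genuine obstacle is the internal cancellation step, and this is precisely where the finite Goldie dimension hypothesis enters; the rest is routine manipulation of injective summands.
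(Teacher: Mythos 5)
Your proposal is correct and follows essentially the same route as the paper: extend the monomorphism to an embedding of $E(N)$ into $E(M)$, use finite Goldie dimension to get direct finiteness and hence internal cancellation of $E(M)$, match up the two complements by an isomorphism, and assemble an automorphism of $E(M)$ whose restriction to $M$ extends the given map. No gaps; the verification that the extension $g$ is monic (via essentiality of $N$ in $E(N)$) is a detail the paper leaves implicit but is handled correctly here.
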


\begin{proof}
Let $N$ be a submodule of $M$. Let $\sigma: N\rightarrow M$ be a monomorphism. Then $\sigma$ can be extended to a monomorphism $\sigma': E(N)\rightarrow E(M)$. Now, we may write $E(M)=\sigma'(E(N))\oplus P=E(N)\oplus Q$ for some submodules $P$ and $Q$ of $E(M)$. Note that $\sigma'(E(N))\cong E(N)$. Since $M$ has finite Goldie dimension, $E(M)$ has finite Goldie dimension. Thus $E(M)$ is a directly-finite injective module, and hence $E(M)$ satisfies internal cancellation property. Therefore, $P\cong Q$. Thus, there exists an isomorphism $\varphi: Q\rightarrow P$. Now consider the mapping $\lambda: E(M)\rightarrow E(M)$ defined as $\lambda(x+y)=\sigma'(x)+\varphi(y)$ where $x\in E(N)$ and $y\in Q$. Clearly, $\lambda$ is an automorphism of $E(M)$. Since $M$ is assumed to be automorphism-invariant, we have $\lambda(M)\subseteq M$. Thus $\lambda|_{M}:M\rightarrow M$ extends $\sigma$. This shows that $M$ is pseudo-injective.
\end{proof}

\bigskip

\noindent It is well known that if $R$ is a ring such that each cyclic right $R$-module is injective then $R$ is semisimple artinian. For more details on rings characterized by properties of their cyclic modules, the reader is referred to \cite{JST}. We would like to understand the structure of rings for which each cyclic module is automorphism-invariant. In \cite{LZ} it is shown that if every 2-generated right module over a ring $R$ is automorphism-invariant, then $R$ is semisimple artinian.    

\bigskip
 
\noindent A ring $R$ is called a {\it right $\SI$ ring} if every singular right $R$-module is injective \cite{Goodearl}. In \cite{HJL} Huynh, Jain, and L\'{o}pez-Permouth proved that a simple ring $R$ is a right $\SI$ ring if and only if every cyclic singular right $R$-module is $\pi$-injective. Their proof can be adapted to show that a simple right noetherian ring $R$ is a right $\SI$ ring if and only if every cyclic singular right $R$-module is automorphism-invariant. 

\bigskip

\noindent The following lemma due to Huynh et al \cite[Lemma 3.1]{HJL1} is crucial for proving our result. This lemma is, in fact, a generalization of a result of J. T. Stafford given in \cite[Theorem 14.1]{CH}. 

\begin{lemma} $($\cite{HJL1}$)$ \label{stafford}
Let $R$ be a simple right Goldie ring which is not artinian and $M$ a torsion right $R$-module. If $M = aR+bR$, where $bR$ is of finite composition length and $f$ is a non-zero element of $R$ then $M = (a+bxf)R$ for some $x \in R$. 
\end{lemma}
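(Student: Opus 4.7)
The plan is to first notice the automatic identity $(a+bxf)R + bR = aR+bR = M$ for every $x \in R$, since $a = (a+bxf)-bxf$. Consequently, producing $x$ with $M = (a+bxf)R$ is equivalent to producing $x$ with $bR \subseteq (a+bxf)R$. I would then induct on $n := \ell(bR)$.

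For the base case $n = 1$, where $bR$ is simple, I would argue by contradiction. If no $x$ succeeds, then $(a+bxf)R \cap bR = 0$ for every $x$ by simplicity of $bR$, and together with the automatic sum this gives $M = (a+bxf)R \oplus bR$ for every $x$. Let $\pi_x \colon M \to bR$ be the associated projection; then $\pi_x|_{bR} = \mathrm{id}$ and $\pi_x(a) = -bxf$. The difference $\psi_x := \pi_0 - \pi_x$ vanishes on $bR$, so it factors through $M/bR = \overline{a}R$ as an $R$-homomorphism $\bar\psi_x$ with $\bar\psi_x(\overline{a}) = bxf$. Well-definedness of $\bar\psi_x$ on the cyclic module $\overline{a}R$ forces $bxf \cdot \mathrm{ann}_R(\overline{a}) = 0$ for every $x$, hence $bR \cdot f \cdot \mathrm{ann}_R(\overline{a}) = 0$. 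The right annihilator of $bR$ is a two-sided ideal of the simple ring $R$, proper because $bR \neq 0$, hence zero; therefore $f \cdot \mathrm{ann}_R(\overline{a}) = 0$. Since $M/bR$ is torsion, $\mathrm{ann}_R(\overline{a})$ contains a regular element $c$, and then $fc = 0$ with $c$ regular forces $f = 0$, contradicting $f \neq 0$.

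For the inductive step with $n > 1$, I would pick a simple submodule $S \subseteq bR$, available because $bR$ is artinian, so that the image of $bR$ in $M/S$ has composition length $n-1$. The inductive hypothesis applied to $M/S$ yields $x \in R$ with $M = (a+bxf)R + S$. Writing $S = sR$ for any nonzero $s \in S$ and choosing $t \in R$ with $s = bt$ (possible since $s \in bR$), one has $M = (a+bxf)R + sR$ with $sR$ simple, and the base case supplies $y \in R$ such that
\[
M \;=\; (a + bxf + syf)R \;=\; \bigl(a + b(x+ty)f\bigr)R,
\]
completing the induction with $z := x + ty$. I expect the main obstacle to be the base case, where the hypothetical failure for \emph{every} $x$ must be converted into a single contradiction; the leverage comes from combining the simplicity of $R$ (to kill the two-sided annihilator of $bR$) with the torsion of $M$ (to produce a regular annihilator of $\overline{a}$ incompatible with $f \neq 0$).
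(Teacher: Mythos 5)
Your argument is correct. Note that the paper does not actually prove this lemma --- it is quoted verbatim from Huynh--Jain--L\'{o}pez-Permouth \cite{HJL1} (itself a variant of Stafford's lemma in \cite{CH}) --- so there is no in-paper proof to compare against; what you have written is a legitimate self-contained proof in the spirit of the classical argument. The two pivots are exactly the right ones: the reduction of the whole statement to the simple case via induction on $\ell(bR)$ together with the identity $(a+bxf)R+bR=M$, and, in the base case, the conversion of ``$M=(a+bxf)R\oplus bR$ for all $x$'' into $bRf\,\mathrm{ann}_R(\overline a)=0$, which dies against the simplicity of $R$ (killing $\mathrm{ann}_r(bR)$) and the torsion hypothesis (producing a regular $c\in\mathrm{ann}_R(\overline a)$ with $fc=0$). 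Two cosmetic remarks: the hypothesis that $R$ is not artinian is never invoked, but this is harmless since over a simple artinian ring the only torsion module is $0$ and the statement is vacuous; and in the base case one can bypass $\psi_x$ entirely by evaluating $\pi_x$ at $ac$ for a regular $c\in\mathrm{ann}_R(\overline a)$, which gives $ac=\pi_x(ac)=-bxfc$ directly and, with $x=0$, yields $bxfc=0$ for all $x$.
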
 

\bigskip
 
\noindent We know that for a prime right Goldie ring $R$, singular right $R$-modules are the same as torsion right $R$-modules. Now, we are ready to prove the following.

\begin{theorem}
Let $R$ be a simple right noetherian ring. Then $R$ is a right $\SI$ ring if and only if every cyclic singular right $R$-module is automorphism-invariant.
\end{theorem}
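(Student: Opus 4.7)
The forward direction is immediate: if $R$ is a right $\SI$ ring then every singular right $R$-module is injective, hence automorphism-invariant, so in particular every cyclic singular right $R$-module is automorphism-invariant.

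For the converse, assume every cyclic singular right $R$-module is automorphism-invariant. If $R$ is right artinian then, being simple, $R$ is semisimple artinian and the conclusion is trivial, so we may assume $R$ is not right artinian. Since $R$ is simple right noetherian, it is prime right Goldie, so singular and torsion right $R$-modules coincide and Lemma \ref{stafford} is at our disposal. The plan is to adapt the Huynh--Jain--L\'opez-Permouth strategy and show that every cyclic singular right $R$-module is injective, which yields right $\SI$.

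Let $M$ be a cyclic singular right $R$-module. Since $R$ is right noetherian, $M$ is noetherian and hence has finite Goldie dimension, so Theorem \ref{GD} upgrades automorphism-invariance to pseudo-injectivity. Suppose for contradiction that $M \subsetneq E(M)$. Because $M$ is essential in $E(M)$ and $R$, being simple right noetherian and non-artinian, admits essential maximal right ideals, the socle of $E(M)$ is nonzero, so we may choose $y \in E(M)\setminus M$ such that $yR$ has finite composition length. Writing $M = aR$ and applying Lemma \ref{stafford} with $bR = yR$, the 2-generated singular module $K = M + yR$ becomes cyclic: $K = (a + yxf)R$ for a suitable $x \in R$. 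Thus $K$ is cyclic singular, hence automorphism-invariant by hypothesis and, having finite Goldie dimension, pseudo-injective by Theorem \ref{GD}.

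The crux is to turn $M \subsetneq K \subseteq E(M)$ into a contradiction. My plan is to use the decomposition machinery of Lemma \ref{new-imp} and Lemma \ref{tuk} applied to $K$: choose a splitting $E(K) = E_1 \oplus E_2$ arising from an indecomposable summand $E_1$ of $E(M)$ that meets $yR$ nontrivially but lies in $E(M) \setminus M$; combine this with the square-free socle conclusion of Corollary \ref{soc} applied to each indecomposable summand of $K$ to get an internal splitting $K = (K \cap E_1)\oplus(K \cap E_2)$. Tracking where $y$ lives across this splitting, together with pseudo-injectivity of $K$ to extend the inclusion $M \hookrightarrow K$ to an endomorphism, forces $y \in M$, the desired contradiction. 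Hence $M = E(M)$ is injective, and a standard Baer-type reduction (every singular module is a sum of cyclic singular, hence injective, submodules, and a directed union of injectives over a noetherian ring is injective) yields that $R$ is right $\SI$.

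The principal obstacle is the final decomposition step: the HJL proof invokes the full $\pi$-injectivity (both $C_1$ and $C_3$) of cyclic singular modules, whereas automorphism-invariance only supplies $C_3$ directly. The work is to compensate for the missing $C_1$ by using finite Goldie dimension via Theorem \ref{GD} and by applying the splitting Lemmas \ref{new-imp} and \ref{tuk} to carefully chosen decompositions of $E(K)$, so as to recover exactly the internal summands that HJL obtain from $C_1$.
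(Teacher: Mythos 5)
Your forward direction and general setup (Stafford's lemma, singular $=$ torsion for a prime right Goldie ring) match the paper, but the converse has a genuine gap precisely at the step you yourself label ``the principal obstacle'': the claim that $M\subsetneq K\subseteq E(M)$ forces $y\in M$ is a plan, not an argument. Lemmas \ref{new-imp} and \ref{tuk} have hypotheses you never verify (a summand $E_1$ admitting an automorphism $\sigma_1$ with $I_{E_1}-\sigma_1$ again an automorphism, respectively a pair of isomorphic summands of $E(K)$), and ``tracking where $y$ lives'' is not carried out. The preliminary step is also unjustified: there is no reason why $E(M)\setminus M$ should contain an element $y$ with $yR$ of finite composition length --- over a simple right noetherian non-artinian ring a cyclic singular module need not have nonzero socle, and the existence of essential maximal right ideals in $R$ says nothing about $\Soc(E(M))$. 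Finally, your closing reduction (a singular module is a \emph{sum} of injective cyclic submodules, hence injective) does not work as stated, since a finite sum of injective submodules need not be injective and such a sum is not a directed union of injectives.

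The tool you are missing is the Lee--Zhou fact quoted in Section 2: if $M_1\oplus M_2$ is automorphism-invariant, then each $M_i$ is injective relative to the other. The paper never tries to prove that cyclic singular modules are injective. Instead it first disposes of the case $\Soc(R_R)\neq 0$, then (a) shows every artinian right $R$-module is semisimple: such a module is singular, and for a cyclic submodule $C$ Stafford's lemma (Lemma \ref{stafford}) makes $C\oplus\Soc(C)$ cyclic singular, hence automorphism-invariant, so $\Soc(C)$ is $C$-injective and splits off, forcing $C=\Soc(C)$; and (b) shows $R/E$ is artinian for every essential right ideal $E$: if $N=R/E$ had a non-artinian submodule $V_1$, then $N\oplus V_1$ would be cyclic by Stafford, hence automorphism-invariant, giving $N=N_1\oplus V_1$, and iterating produces an infinite direct sum inside the noetherian module $N$. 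Goodearl's characterization (right nonsingular with all singular modules semisimple implies right $\SI$) then finishes. To salvage your route you would need a genuinely new argument at the decomposition step; as written, the proof is incomplete.
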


\begin{proof}
Let $R$ be a simple right noetherian ring such that every cyclic singular right $R$-module is automorphism-invariant. We wish to show that $R$ is a right $\SI$ ring. If $\Soc(R_R)\neq 0$, then as $R$ is a simple ring, $R=\Soc(R_R)$ and hence $R$ is simple artinian. 

Now, assume $\Soc(R_R)=0$. Let $M$ be any artinian right $R$-module. Since any module is homomorphic image of a free module, we may write $M\cong F/K$ where $F$ is a free right $R$-module. We first claim that $K$ is an essential submodule of $F$. Assume to the contrary that $K$ is not essential in $F$. Let $T$ be a complement of $K$ in $F$. Note that $M \cong \frac{F}{K} \supset \frac{K\oplus T}{K}\cong T$. Since $M$ is an artinian module, $\Soc(M)\neq 0$ and consequently $\Soc(T)\neq 0$. This yields that $\Soc(F)\neq 0$, a contradiction to the assumption that $\Soc(R_R)=0$. Therefore, $K$ is an essential submodule of $F$ and hence $M$ is a singular module. Let $C$ be a cyclic submodule of $M$. We have $\Soc(C)\neq 0$. As $R$ is right noetherian and $C$ is a cyclic right $R$-module, $C$ is noetherian. Thus we have $\Soc(C)=\oplus_{i=1}^{k}S_i$ where each $S_i$ is a simple right $R$-module. By the above lemma, it follows that $C\oplus S_1$ is cyclic. By induction, it may be shown that $C\oplus \Soc(C)$ is cyclic. Now as $C\oplus \Soc(C)$ is a cyclic singular right $R$-module, by assumption $C\oplus \Soc(C)$ is automorphism-invariant. Hence $\Soc(C)$ is $C$-injective. Therefore, $\Soc(C)$ splits in $C$ and hence $C=\Soc(C)\subset M$. Thus $M$ is semisimple. This shows that any artinian right $R$-module $M$ is semisimple.

Now, we prove that every singular module over $R$ is semisimple, or equivalently, for each essential right ideal $E$ of $R$, $R/E$ is semisimple. By the above claim, it suffices to show that $R/E$ is artinian. Set $N=R/E$. If $N$ is not artinian, then we get  $0\subset V_1 \subset N$ with $V_1$ not artinian. Now $N$ is torsion, so is $V_1$. Therefore, $Q=N\oplus V_1$ is torsion and hence cyclic by Lemma \ref{stafford}. Thus we can write $xR=N\oplus V_1$ for some $x\in R$. By the assumption, $xR$ is automorphism-invariant. Hence $V_1$ is $N$-injective. So $N=N_1\oplus V_1$. Repeat the process with $V_1$, so $V_1=N_2\oplus V_2$, where $N_2\neq 0$ and $V_2$ is not artinian. Continuing this process, we get an infinite direct sum of $N_i$ in $N$, which is a contradiction. Thus we conclude that any singular right $R$-module is artinian and consequently semisimple. 

Thus $R$ is a right nonsingular ring such that every singular right $R$-module is semisimple. Hence, by \cite{Goodearl}, $R$ is a right $\SI$ ring.

The converse is obvious.   
\end{proof}

\bigskip

\bigskip

\section{Questions}

\bigskip

\noindent Question 1: Does every automorphism-invariant module satisfy the property C2 ?

\bigskip

\noindent Lee and Zhou \cite{LZ} have shown that every automorphism-invariant module satisfies the property C3.

\bigskip

\noindent Question 2: What is example of an automorphism-invariant module which is not pseudo-injective?

\bigskip

\noindent In Theorem \ref{GD} above, we have shown that such a module cannot have finite Goldie dimension.

\bigskip

\noindent A module $M$ is called skew-injective if for every submodule $N$ of $M$, any endomorphism of $N$ extends to an endomorphism of $M$. In \cite{JST} it is asked whether every skew-injective module with essential socle is quasi-injective. We ask the following

\bigskip

\noindent Question 3: Is every automorphism-invariant module with essential socle a quasi-injective module?  

\bigskip

\noindent Call a ring $R$ to be a {\it right a-ring} if each right ideal of $R$ is automorphism-invariant. 

\bigskip

\noindent Question 4: Describe the structure of a right a-ring.

\bigskip

\noindent Call a ring $R$ to be a {\it right $\Sigma$-a-ring} if each right ideal of $R$ is a finite direct sum of automorphism-invariant right ideals.

\bigskip

\noindent Question 5: Describe the structure of a right $\Sigma$-a-ring.

\bigskip

\noindent Question 6: Let $R$ be a simple ring such that $R_R$ is automorphism-invariant. Must $R$ be a right self-injective ring ?

\bigskip

\noindent In fact, this question is open even when $R_R$ is pseudo-injective (see \cite{ClH}).

\bigskip

\bigskip

\bigskip

\bigskip


\begin{thebibliography}{99}

\bigskip

\bigskip

\bibitem{CKLNZ}
V. P. Camillo, D. Khurana, T. Y. Lam, W. K. Nicholson, Y. Zhou, Continuous modules are clean, J. Algebra, 304 (2006), 94-111.

\bibitem{CH}
A. W. Chatters, C. R. Hajarnavis, Rings with Chain Conditions, Pitman, London, 1980.

\bibitem{ClH}
J. Clark and D.V. Huynh, Simple rings with injectivity conditions on one-sided ideals, Bull. Australian Math. Soc. 76 (2007), 315-320.

\bibitem{CJ}
C. W. Curtis and J. P. Jans, On algebras with a finite number of indecomposable modules, Trans. Amer. Math. Soc. 114 (1965), 122-132.

\bibitem{DF}
S. E. Dickson, K. R. Fuller, Algebras for which every indecomposable right module is invariant in its injective envelope, Pacific J. Math., vol. 31, no. 3 (1969), 655-658.

\bibitem{Goodearl}
K. R. Goodearl, Singular torsion and the splitting properties, Memoirs Amer. Math. Soc. 124, 1972.

\bibitem{HJL1} D. V. Huynh, S. K. Jain, S. R. L\'{o}pez-Permouth, When is a simple ring noetherian?, J. Algebra, 184 (1996), 784-794.

\bibitem{HJL} D. V. Huynh, S. K. Jain, S. R. L\'{o}pez-Permouth, When cyclic
singular modules over a simple ring are injective, J. Algebra, 263 (2003),
185-192.

\bibitem{JST}
S. K. Jain, A. K. Srivastava and A. A. Tuganbaev, Cyclic Modules and the Structure of Rings, Oxford Mathematical Monographs, Oxford University Press, 2012.

\bibitem{Jans}
 J. P. Jans, On the indecomposable representations of algebras, Ann. of Math. (2) 66 (1957), 418-429.

\bibitem{l1} 
T. Y. Lam, A First Course in Noncommutative Rings, Second Edition, Springer-Verlag, 2001.

\bibitem{l2} 
T. Y. Lam, Lectures on Modules and Rings, Springer-Verlag, 1999.

\bibitem{l3}
T. Y. Lam, A crash course on stable range, cancellation, substitution and exchange, J. Alg. Appl., vol. 3, no. 3 (2004), 301-343.

\bibitem{LZ}
T. K. Lee, Y. Zhou, Modules which are invariant under automorphisms of their injective hulls, J. Alg. Appl., to appear.

\bibitem{MM}
S. H. Mohamed, B. J. M\"{u}ller, Continuous and Discrete Modules, London Math. Soc. Lecture Note Ser., vol. 147, Cambridge Univ. Press, Cambridge, 1990.

\bibitem{SA}
S. Singh, H. Al- Bleehed, Rings with indecomposable modules local, Beitr$\ddot{a}$ge zur Algebra und Geometrie, 45 (2005), 239-251.

\bibitem{SS}
S. Singh, A. K. Srivastava, Dual automorphism-invariant modules, preprint. 

\bibitem{Teply}
M. L. Teply, Pseudo-injective modules which are not quasi-injective, Proc. Amer. Math. Soc., vol. 49, no. 2 (1975), 305-310.

\end{thebibliography}
\end{document}